%
%
%
\pdfpagewidth=8.5truein
\pdfpageheight=11truein
%

\documentclass[11pt]{article}
\usepackage{amssymb,amsmath,amsthm}
\usepackage{multirow}
\usepackage{enumerate}
\usepackage[dvipsnames]{xcolor}
\usepackage{pgf,tikz}
\usepackage{hyperref}
\usetikzlibrary{arrows}

\newenvironment{packedItem}{
\begin{itemize}
  \setlength{\itemsep}{1pt}
  \setlength{\parskip}{0pt}
  \setlength{\parsep}{0pt}
}{\end{itemize}}

\newenvironment{packedEnum}{
\begin{enumerate}
  \setlength{\itemsep}{1pt}
  \setlength{\parskip}{0pt}
  \setlength{\parsep}{0pt}
}{\end{enumerate}}

\setlength{\marginparwidth}{.75in}
\let\oldmarginpar\marginpar
\renewcommand\marginpar[1]{\-\oldmarginpar[\raggedleft\footnotesize #1]%
{\raggedright\footnotesize #1}}

\setlength{\topmargin}{ -1.5cm}
\setlength{\oddsidemargin}{ -0.5cm}
\textwidth 17cm
\textheight 22.4cm

\newtheorem{theorem}{Theorem}
\newtheorem{corollary}[theorem]{Corollary}
\newtheorem{lemma}[theorem]{Lemma}
\newtheorem{proposition}[theorem]{Proposition}
\newtheorem{obs}[theorem]{Observation}

\theoremstyle{definition}

\newtheorem{definition}[theorem]{Definition}

\newtheorem{construction}{Construction}
\numberwithin{equation}{section}

\newcommand{\h}{\mathcal{H}}
\newcommand{\ucal}{\mathcal{U}}
\newcommand{\wcal}{\mathcal{W}}

\newcommand{\calk}{\mathcal{K}}
\newcommand{\kcal}{\mathcal{K}}
\newcommand{\s}{\mathcal{S}}

\newcommand{\diff}{{\rm{diff}}}


\newcommand{\nbd}{N}

\newcommand{\Nnn}{\mathbb{N}}

\newcommand{\vanish}[1]{}

\begin{document}

\title{Classification of Reconfiguration Graphs of Shortest Path Graphs With No Induced $4$-cycles\\
}

\author{
John Asplund\\
{\small Department of Technology and Mathematics,} \\
{\small Dalton State College,} \\
{\small Dalton, GA 30720, USA} \\
{\small jasplund@daltonstate.edu}\\
\\
Brett Werner\\
{\small Department of Mathematics,} \\
{\small University of Colorado, Boulder,} \\
{\small Boulder, CO, 80309, USA} \\
{\small brett.werner@colorado.edu}\\
\\
 }

\date{\today}
\maketitle

\begin{abstract}
For any graph $G$ with $a,b\in V(G)$, a shortest path reconfiguration graph can be formed with respect to $a$ and $b$; we denote such a graph as $S(G,a,b)$. The vertex set of $S(G,a,b)$ is the set of all shortest paths from $a$ to $b$ in $G$ while two vertices $U,W$ in $V(S(G,a,b))$ are adjacent if and only if the vertex sets of the paths that represent $U$ and $W$ differ in exactly one vertex. In a recent paper [Asplund et al., \textit{Reconfiguration graphs of shortest paths}, Discrete Mathematics \textbf{341} (2018), no. 10, 2938--2948], it was shown that shortest path graphs with girth five or greater are exactly disjoint unions of even cycles and paths. In this paper, we extend this result by classifying all shortest path graphs with no induced $4$-cycles.
\end{abstract}

\section{Introduction}

In reconfiguration problems, the objective is to determine whether it is possible to transform one feasible solution into a target feasible solution in a step-by-step manner (a reconfiguration), such that each intermediate solution is also feasible.
Such transformations can be studied via the reconfiguration graph, in which the vertices represent feasible solutions and there is an edge between two vertices when it is possible to get from one feasible solution to another in a single application of the reconfiguration rule.
Many types of reconfiguration problems have been studied with drastically different reconfiguration rules: 
vertex coloring \cite{BJLPP,BC,CJV,CJV2,CJV3}, independent sets \cite{HD,IDHPSUU,KMM}, matchings \cite{IDHPSUU}, list-colorings \cite{IKD}, matroid bases \cite{IDHPSUU}, and subsets of a (multi)set of numbers \cite{EW}.
This paper focuses on the reconfiguration of shortest paths in a graph. The \textit{shortest path graph} (SPG) of a graph $G$ with respect to $a,b\in V(G)$ is a graph where each vertex corresponds to a shortest path in $G$ from $a$ to $b$, or an $\mathit{(a,b)}$\textit{-geodesic}, and two vertices in the SPG are adjacent if and only their corresponding $(a,b)$-geodesics in $G$ differ at exactly one vertex.

Kami\'{n}ski, Medvedev, and Milani\v{c} \cite{KMM} showed that a family of graphs whose size is linear in $k$ has diameter of the reconfiguration graph that is $\Omega(2^k)$. That is, as the size of a graph increases, the diameter of the SPG of $G$ can be exponential.  Relatedly, Bonsma \cite{B} showed that the question of determining if there is a path in a SPG between all pairs of vertices is PSPACE-complete. For these reasons, we suspect characterizing the remaining SPGs will likely be difficult, yet important. 
Any progress in the direction of characterizing these graphs is worthwhile as mentioned in \cite{KMM}. Recent studies in the area of reconfigurability have shown an emerging pattern where the most ``natural'' problems (e.g., finding a spanning tree in $G$) can be done in polynomial-time and its reconfigurability problem (e.g., finding a spanning tree in the SPG of $G$) can also be done in polynomial-time. But since it was shown in \cite{KMM} that the reconfigurability problem (finding a shortest path between any two vertices in the SPG of $G$) is NP-hard while the ``natural'' problem is in P (finding a shortest path between any two vertices in $G$), we believe investigation into when it becomes NP-hard is worthwhile. 

Asplund et al. \cite{AAEHHNW} showed that cycles are central to characterizing SPGs. We denote a cycle of length $k$ as a $k$-cycle or as $C_k=(v_1,v_2,\ldots,v_k)$. 
One of the main results of that paper was the classification of all SPGs with girth at least $5$. 
This paper also established that induced $4$-cycles are extremely prevalent in SPGs and the structure of SPGs containing an induced $4$-cycle can be rather complex. In this paper, we continue investigating the structure of SPGs leading to a classification of all SPGs that do not contain an induced $4$-cycle. 

This paper is organized as follows. In Section~\ref{notation}, the necessary notation and terminology are introduced, and several results from \cite{AAEHHNW} that are necessary to prove the results in this paper are given. In Section~\ref{prelims}, there are some preliminary results that will simplify the main results of the paper. The main result of this paper is found in Section~\ref{girth3}, where all SPGs that contain a $3$-cycle, but no induced $4$-cycles are characterized. This result along with the girth $5$ result from \cite{AAEHHNW} classifies all SPGs with no induced $4$-cycles. 
The constructions described in Section~\ref{girth3} will be stronger than is needed to prove the main theorem. In fact, these constructions can be used on a number of SPGs to build larger SPGs.

\section{Notation, Terminology, and Previous Results} \label{notation}

As discussed in the introduction, the focus of this paper is on a specific class of reconfiguration graphs: shortest path graphs. 
\begin{definition} Let $G$ be a graph with distinct vertices $a$ and $b$. The \textit{shortest path graph} (SPG) of $G$ with respect to $a$ and $b$, denoted $S(G,a,b)$, is a graph where each vertex corresponds to a shortest path in $G$ from $a$ to $b$, and two vertices $U,W \in V(S(G,a,b))$ are adjacent if and only if their corresponding paths in $G$ differ in exactly one vertex.
\end{definition}

All graphs considered in this paper are simple. A shortest path from vertex $a$ to vertex $b$ will be called a \textit{shortest $(a,b)$-path} or an \textit{$(a,b)$-geodesic}. To reduce notational confusion when discussing vertices in $G$ versus vertices in $S(G,a,b)$, we will adopt the convention of using lower-case letters to denote vertices in $G$ and upper-case letters to denote vertices in $S(G,a,b)$. Furthermore, for convenience, a vertex $U$ in $V(S(G,a,b))$ will be referred to as both a vertex in $S(G,a,b)$ and as an $(a,b)$-geodesic in $G$ when needed. The context will help distinguish to which situation we refer. If $\h$ is a SPG where $S(G,a,b)=\h$ for some graph $G$ and vertices $a,b\in V(G)$, then we say that $G$ is a \textit{base graph} of $\h$. 

In our classification of SPGs, it will be necessary to identify forbidden induced subgraphs of a graph. Given a graph $G$ and $M\subseteq V(G)$, the subgraph of $G$ induced by $M$ is denoted as $G[M]$. 
If $G$ and $H$ are graphs, then $G$ is said to be $H$-free if no induced subgraph of $G$ is isomorphic to $H$. 
A graph with girth $g$, contains a cycle of length $g$, but does not contain a cycle with length smaller than $g$. 

Two pivotal concepts used throughout this paper are index levels
and difference indicies.
Let $G$ be a graph and let $\h = S(G,a,b)$ where the distance from $a$ to $b$ in $G$ is $n+1$. Then, $(a,b)$-geodesics in $G$ have the form $av_1\ldots v_{n}b$, so we will say that $(a,b)$-geodesics in $G$ have $n$ \textit{index levels}, and vertex $v_i$ is at index level $i$ in the $(a,b)$-geodesic. Note that if $v=v_i$, for some $i$ with $1 \leq i \leq n$, then $v$ can only appear in $(a,b)$-geodesics at index level $i$. So, the \textit{index level} of $v$ is defined to be $i$. For the sake of convenience, we will say index level $i$ in a graph $G$ with indicated vertices $a$ and $b$ to be the same as the $i^{\rm th}$ index level of an $(a,b)$-geodesic. Given two $(a,b)$-geodesics $U$ and $W$ in $V(S(G,a,b))$, we define the \textit{difference index set} of $U$ and $W$, denoted as $\diff(U,W)$, as the set of all index levels of the vertices where $U$ and $W$ differ. Note that $U$ and $W$ are adjacent if and only if $|\diff(U,W)| = 1$. If $UW \in E(S(G,a,b))$, the single index level in $\diff(U,W)$ will be called the \textit{difference index} of $UW$.

Much of the work done in this paper revolves around cliques in SPGs. The following theorem provides necessary and sufficient conditions for a SPG to be a clique. 

\begin{theorem}\label{completeGraph}
{\normalfont\cite{AAEHHNW}} $S(G,a,b)=K_n$, for some $n\in \Nnn$,  if and only if each pair of $(a,b)$-geodesics in $G$ differs at the same index. 
\end{theorem}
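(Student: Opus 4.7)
The plan is to prove both directions directly from the definitions of adjacency in $S(G,a,b)$ and the difference index.

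For the backward direction, I would start with the easy implication. Suppose every pair of $(a,b)$-geodesics differs at exactly the same index $i$. Then for any two distinct geodesics $U,W$ we have $|\diff(U,W)| = 1$, so $UW \in E(S(G,a,b))$. Since this holds for all pairs and there are $n$ geodesics, $S(G,a,b) \cong K_n$.

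For the forward direction, assume $S(G,a,b) = K_n$. Since every pair of distinct geodesics is adjacent, $|\diff(U,W)| = 1$ for any two distinct geodesics $U,W$. The content is to show that the single differing index is the \emph{same} for every pair. I would fix a reference geodesic $U_1$, pick another geodesic $U_2$ with $\diff(U_1,U_2) = \{i\}$, and then argue that every other geodesic $U_k$ also satisfies $\diff(U_1,U_k) = \{i\}$. Indeed, $U_k$ is adjacent to $U_1$, so $\diff(U_1,U_k) = \{j\}$ for some index $j$. If $j \neq i$, then comparing $U_2$ and $U_k$ index by index, they both agree with $U_1$ outside $\{i,j\}$ while disagreeing with $U_1$ at their own pivot index. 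So $U_2$ and $U_k$ differ at both $i$ and $j$, giving $|\diff(U_2,U_k)| \geq 2$, contradicting the fact that $U_2 U_k \in E(S(G,a,b))$. Hence $j = i$. This forces every geodesic to coincide with $U_1$ outside index $i$, and consequently every pair of distinct geodesics differs only at index $i$.

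There is no serious obstacle here; the only subtle point is the three-geodesic argument that rules out two different pivot indices, and this is a short index-level comparison. The key structural insight the proof will rely on is that adjacency in $S(G,a,b)$ records a single coordinate change, so a triangle of geodesics with two different pivot indices would force a two-coordinate change on the third edge.
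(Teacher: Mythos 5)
Your proof is correct. Note that the paper states Theorem~\ref{completeGraph} as a result quoted from \cite{AAEHHNW} and gives no proof of its own, so there is nothing to compare against here; your argument --- the easy backward direction plus the three-geodesic ``two pivot indices force a two-coordinate difference'' contradiction for the forward direction --- is the standard and essentially only natural proof, and it handles the content of the statement completely (the $n\le 2$ cases being vacuous).
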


Extending Theorem~\ref{completeGraph} slightly, we see that for any maximal clique $\calk$ in a SPG, the difference indices of any two edges in $\calk$ are the same. Out of convenience, we say the difference index of $\calk$ is defined to be the common difference index of the edges of $\calk$. Another important observation is that when a $4$-cycle is present in an SPG, the difference indices on the edges of that $4$-cycle must alternate between the same pair of difference indices.

\begin{obs}\label{4cycle_ob}
There are exactly two distinct difference indices among all the edges of any induced $4$-cycle in a SPG and those difference indices must alternate as one traverses the edges of the $4$-cycle. 
\end{obs}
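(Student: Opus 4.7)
The plan is to label the induced $4$-cycle $U_1U_2U_3U_4U_1$ in $S(G,a,b)$, let $d_i$ denote the difference index of the edge $U_iU_{i+1}$ (indices mod $4$), and show two things: that consecutive $d_i$ are distinct, and that the multiset $\{d_1,d_2,d_3,d_4\}$ has each element appearing an even number of times.

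First I would verify that $d_i \neq d_{i+1}$ for each $i$. Suppose, for contradiction, that $d_i = d_{i+1}$. Then $U_i$ and $U_{i+2}$ agree at every index level except possibly $d_i$, since the edge $U_iU_{i+1}$ changes only the vertex at index $d_i$ and the edge $U_{i+1}U_{i+2}$ changes only the vertex at index $d_{i+1}=d_i$. Since $U_i$ and $U_{i+2}$ are distinct vertices of the cycle, they must actually differ at index $d_i$, giving $|\diff(U_i,U_{i+2})|=1$. This forces $U_iU_{i+2} \in E(S(G,a,b))$, contradicting the fact that the $4$-cycle is induced.

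Next I would run a parity argument. Fix any index level $j \in \{1,\dots,n\}$ and look at the sequence of vertices at index $j$ in $U_1, U_2, U_3, U_4, U_1$. This sequence starts and ends with the same vertex, and changes from one term to the next precisely when the corresponding edge has difference index $j$. Hence the number of indices $i \in \{1,2,3,4\}$ with $d_i = j$ must be even. Applied to every $j$, this tells us that the multiset $\{d_1,d_2,d_3,d_4\}$ has each distinct value occurring an even number of times.

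Combining the two facts finishes the proof. If all four $d_i$ were equal, we would contradict $d_1 \neq d_2$; otherwise the multiset must consist of two distinct values each appearing twice. Among the three ways to pair the four positions into two pairs, the pairings $\{d_1,d_2\},\{d_3,d_4\}$ and $\{d_2,d_3\},\{d_1,d_4\}$ are ruled out by the consecutive-distinctness we already established, leaving only $d_1 = d_3$ and $d_2 = d_4$ with $d_1 \neq d_2$. This is precisely the statement that there are exactly two distinct difference indices and that they alternate around the $4$-cycle. The only subtle point — and the one place a careful reader might stumble — is the parity step, so I would make sure to phrase it as tracking each index level independently rather than arguing globally about the geodesics.
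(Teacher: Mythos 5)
The paper records this statement as an observation with no written proof, so there is no official argument to compare against; your two-step strategy (consecutive difference indices are distinct, plus a per-index counting argument) is the natural one, and your first step is correct: if $d_i=d_{i+1}$ then $\diff(U_i,U_{i+2})\subseteq\{d_i\}$, and since $U_i\neq U_{i+2}$ this gives $|\diff(U_i,U_{i+2})|=1$, making $U_i\sim U_{i+2}$ and contradicting that the $4$-cycle is induced.

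The one step that does not hold as you stated it is the parity claim. A closed sequence that returns to its starting value need not change an even number of times once more than two values are available: the sequence $a,b,c,a$ followed by a step staying at $a$ changes three times yet starts and ends at $a$. Translated back, nothing in ``starts and ends with the same vertex'' by itself rules out exactly three of the four edges having difference index $j$. What saves the argument is that three changes would force two cyclically consecutive edges to share the difference index $j$, which your first step already excludes; and exactly one change is genuinely impossible, since chaining the equalities along the other three steps forces the two endpoints of the single changing step to coincide. So the correct statement of the counting step is: for each index level $j$, the number of edges of the $4$-cycle with difference index $j$ is at most $2$ (no two cyclically consecutive by your first step) and is not $1$ (a single change cannot close up), hence is $0$ or $2$. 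With that replacement, your final paragraph goes through verbatim: the multiset $\{d_1,d_2,d_3,d_4\}$ consists of two distinct values each occurring twice, and the only pairing consistent with consecutive distinctness is $d_1=d_3$, $d_2=d_4$, which is the claimed alternation.
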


The following proposition from \cite{AAEHHNW} will also be useful.

\begin{proposition}\label{manybasegraphs}
{\normalfont\cite{AAEHHNW}} If $\h=S(G,a,b)$ and $d_G(a,b) = n$, then for any $n'\geq n$ there exists a graph $G'$ with vertices $a, b'\in V(G')$ such that $d_{G'}(a, b') = n'$ and $\h\cong S(G',a,b')$.
\end{proposition}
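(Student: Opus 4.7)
The plan is to build $G'$ by attaching a new simple path of the appropriate length to the vertex $b$ in $G$. Specifically, if $n' = n$ we take $G' = G$ and $b' = b$; otherwise, let $k = n'-n \geq 1$ and introduce $k$ fresh vertices $u_1,\ldots,u_k$ (disjoint from $V(G)$). Define $G'$ to have vertex set $V(G) \cup \{u_1,\ldots,u_k\}$ and edge set $E(G) \cup \{bu_1, u_1u_2,\ldots,u_{k-1}u_k\}$, and set $b' = u_k$. The idea is that this appended ``tail'' is forced to be traversed identically by every $(a,b')$-geodesic, so the SPG is unchanged up to relabeling.

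Next I would verify the distance claim and describe the $(a,b')$-geodesics. Since the only edge between $\{u_1,\ldots,u_k\}$ and $V(G)$ is $bu_1$, any walk from $a$ to $b'$ in $G'$ must first reach $b$ and then traverse $b, u_1, u_2,\ldots, u_k$. Thus $d_{G'}(a,b') = d_G(a,b) + k = n'$, and every $(a,b')$-geodesic $P'$ has the form $P' = P \cdot u_1 u_2 \cdots u_k$, where $P$ is an $(a,b)$-geodesic of $G$. Conversely, each $(a,b)$-geodesic of $G$ extends uniquely to an $(a,b')$-geodesic of $G'$ by appending $u_1,\ldots,u_k$. This yields a bijection $\varphi \colon V(S(G,a,b)) \to V(S(G',a,b'))$.

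Finally I would check that $\varphi$ preserves adjacency, which is immediate from the definition of adjacency in a SPG. Two $(a,b')$-geodesics $\varphi(U)$ and $\varphi(W)$ agree on the suffix $u_1,\ldots,u_k$ by construction, so the set of index levels where they differ coincides with $\diff(U,W)$ computed in $S(G,a,b)$. Hence $|\diff(\varphi(U),\varphi(W))| = 1$ iff $|\diff(U,W)| = 1$, so $\varphi$ is a graph isomorphism and $\h \cong S(G',a,b')$.

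There is no real obstacle here; the only point to be careful about is ensuring that the appended tail does not create shortcuts or alternative entry points into $G$, which is why the $u_i$ must be new vertices and the unique edge to $G$ must be $bu_1$. With that set up, the path extension is rigid, the bijection of geodesics is forced, and the isomorphism of the SPGs follows.
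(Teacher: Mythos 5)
Your proof is correct: appending a pendant path of length $n'-n$ at $b$ forces every $(a,b')$-geodesic to be an $(a,b)$-geodesic followed by the fixed tail, and the resulting bijection clearly preserves the difference-index sets. The paper itself states this proposition without proof (citing \cite{AAEHHNW}), and your tail-extension construction is the standard argument for it, so there is nothing to add.
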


One of these properties is that SPGs are $C_5$-free. Another---pivotal for characterizing all SPGs with girth $5$ or more---is that if a SPG contains an induced claw, then it must also contain an induced $4$-cycle.

\begin{theorem}\label{noClaw}
{\normalfont\cite{AAEHHNW}} The claw, $K_{1,3}$, is not a SPG. Furthermore, if a SPG $\h$ has an induced claw, then $\h$ has an induced $4$-cycle containing two edges of the induced claw.
\end{theorem}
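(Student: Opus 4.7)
The plan is to establish the second (stronger) statement, from which the first follows immediately: if $K_{1,3}$ were a SPG, it would contain an induced claw (itself), hence an induced $4$-cycle, which $K_{1,3}$ does not possess. So let $\h = S(G,a,b)$ and suppose $U, W_1, W_2, W_3 \in V(\h)$ induce a claw with center $U$. Write $U$ as the $(a,b)$-geodesic $au_1u_2 \cdots u_n b$, and for each $i$ let $d_i$ denote the unique difference index of the edge $UW_i$, so that $W_i$ agrees with $U$ everywhere except at position $d_i$, where it has some $w_i \neq u_{d_i}$.

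The first step is a pigeonhole argument: the three difference indices $d_1, d_2, d_3$ must be pairwise distinct. If $d_i = d_j$ for some $i \neq j$, then $W_i$ and $W_j$ both agree with $U$ outside position $d_i$, so $|\diff(W_i,W_j)| \leq 1$; since $W_i \neq W_j$ it equals $1$, making $W_iW_j$ an edge of $\h$, contradicting the claw being induced. Since any three distinct integers contain a pair differing by at least $2$, after relabeling I may assume $|d_1 - d_2| \geq 2$.

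The core of the proof is to produce a new $(a,b)$-geodesic $W'$ by modifying $U$ at positions $d_1$ and $d_2$ simultaneously, inserting $w_1$ and $w_2$ respectively. I would verify that $W'$ really is a geodesic as follows: the adjacencies $u_{d_1 \pm 1} w_1$ are inherited from $W_1$ and the adjacencies $u_{d_2 \pm 1} w_2$ from $W_2$, and because $|d_1 - d_2| \geq 2$ these two local modifications do not interact (in particular no edge between $w_1$ and $w_2$ is demanded). Index-level uniqueness forbids repeated vertices, so $W'$ is an $(a,b)$-path of the correct length $n+1 = d_G(a,b)$, hence lies in $V(\h)$. One then checks the difference sets: $\diff(W', W_1) = \{d_2\}$ and $\diff(W', W_2) = \{d_1\}$ give the two new edges $W'W_1, W'W_2$, while $\diff(U, W') = \diff(W_1, W_2) = \{d_1, d_2\}$ show that $UW'$ and $W_1 W_2$ are non-edges. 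Thus $UW_1 W' W_2$ is an induced $4$-cycle containing the claw edges $UW_1$ and $UW_2$.

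The main obstacle I expect is precisely the construction of $W'$ as a valid geodesic in $G$, and this is what forces the spacing condition $|d_1 - d_2| \geq 2$: if the two chosen difference indices were consecutive, merging $W_1$ and $W_2$ would demand an unavailable edge $w_1w_2$. The pigeonhole step above is what makes this spacing always achievable from the three distinct difference indices of an induced claw.
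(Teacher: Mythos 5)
Your proof is correct. The paper itself states this result without proof, citing \cite{AAEHHNW}, and your argument --- showing the three difference indices of the claw's edges are pairwise distinct, picking two that differ by at least $2$, and splicing the two corresponding single-vertex modifications of the center geodesic into a fourth geodesic that completes an induced $4$-cycle --- is essentially the same argument given in that reference.
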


It was also shown in \cite{AAEHHNW} that if a SPG has an induced $C_k$ for odd $k>5$, then the SPG must contain an induced $4$-cycle. 

\begin{theorem}\label{oddtoC4}
{\normalfont\cite{AAEHHNW}} If a SPG $\h$ has an induced $C_k$ for odd $k > 5$, then $\h$ has an induced $4$-cycle.
\end{theorem}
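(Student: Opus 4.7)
The plan is to track the sequence of difference indices as one traverses the induced cycle and combine a parity argument with an explicit local construction. Let $\h = S(G,a,b)$ and suppose $C = U_1 U_2 \cdots U_k U_1$ is an induced cycle with $k > 5$ odd. For each $i$ (indices read cyclically modulo $k$) let $d_i$ be the difference index of the edge $U_i U_{i+1}$. Because $C$ is induced and $k > 3$, consecutive difference indices must satisfy $d_i \neq d_{i+1}$: otherwise $U_i$, $U_{i+1}$, $U_{i+2}$ would agree at every index level except possibly $d_i$, forcing either $U_i = U_{i+2}$ or the chord $U_i U_{i+2}$.

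I would then split into two cases according to whether some consecutive pair of difference indices is non-adjacent as levels. Suppose first that $|d_i - d_{i+1}| > 1$ for some $i$. Let $v,x$ be the vertices of $U_i$ at levels $d_i, d_{i+1}$, and let $w,y$ be the vertices of $U_{i+2}$ at the same two levels. Define $U'$ to be the sequence obtained from $U_i$ by replacing $x$ with $y$ at level $d_{i+1}$. Because $|d_i - d_{i+1}| > 1$, the vertices at levels $d_{i+1}-1$ and $d_{i+1}+1$ coincide across $U_i$, $U_{i+1}$, and $U_{i+2}$, so $y$ has the same neighbors in $U'$ as in $U_{i+2}$, ensuring $U'$ is a genuine $(a,b)$-geodesic. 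A direct computation of the difference sets gives $\diff(U_i,U') = \{d_{i+1}\}$, $\diff(U',U_{i+2}) = \{d_i\}$, $\diff(U_i, U_{i+2}) = \{d_i, d_{i+1}\}$, and $\diff(U_{i+1}, U') = \{d_i, d_{i+1}\}$. Thus $U_i U_{i+1} U_{i+2} U'$ is an induced $4$-cycle in $\h$, as desired.

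Suppose instead that $|d_i - d_{i+1}| = 1$ for every $i$. Then reading the $d_i$ as integers, the sequence $d_1, d_2, \ldots, d_k, d_1$ is a closed walk of length $k$ on $\Z$ using only $\pm 1$ steps, which requires equally many $+1$ and $-1$ steps and hence $k$ even, contradicting the assumption that $k$ is odd. So this case is impossible, and the first case must occur. The main obstacle is verifying Case 1: confirming that $U'$ is actually a shortest $(a,b)$-geodesic (not merely a walk) and that the resulting $4$-cycle is genuinely induced in $\h$, both of which rely crucially on the non-adjacency of $d_i$ and $d_{i+1}$ as index levels so that the edits at levels $d_i$ and $d_{i+1}$ do not interfere with one another.
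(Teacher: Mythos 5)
This statement is quoted from \cite{AAEHHNW} and the present paper gives no proof of it, so there is no in-paper argument to compare against; judged on its own, your proof is correct. The two steps check out: the swap producing $U'$ really yields an $(a,b)$-geodesic because a walk from $a$ to $b$ of length $d_G(a,b)$ is automatically a shortest path and, since $|d_i-d_{i+1}|>1$, the vertices of $U_i$ and $U_{i+2}$ at levels $d_{i+1}\pm 1$ agree, so $y$ keeps its neighbors; the computed difference sets then give a genuinely induced $4$-cycle on the four pairwise distinct vertices $U_i,U_{i+1},U_{i+2},U'$. The remaining case is killed by the closed-walk parity argument, and the hypothesis $k>3$ is exactly what guarantees $d_i\neq d_{i+1}$ (for $k=3$ the cycle is a clique and the claim fails), so the case analysis is complete. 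This is essentially the standard argument for this result, and in fact it proves the slightly stronger statement that any induced odd cycle of length at least $5$ forces an induced $C_4$.
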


There were multiple ways in which \cite{AAEHHNW} showed how new SPGs could be created from two existing SPGs. One of these is the disjoint union. That is, $\h_1\cup \h_2$ is a SPG if $\h_1$ and $\h_2$ are SPGs. 

\begin{theorem}\label{disconnect}  
{\normalfont\cite{AAEHHNW}} If  $\h_1$ and $\h_2$  are  SPGs, then $\h_1\cup \h_2$ is a SPG.
\end{theorem}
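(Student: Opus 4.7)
The plan is to build a single base graph $G$ by gluing base graphs of $\h_1$ and $\h_2$ along their endpoints, so that the resulting $(a,b)$-geodesics split cleanly into the two families. Concretely, start with base graphs $G_1, G_2$ and vertices $a_i, b_i \in V(G_i)$ with $S(G_i, a_i, b_i) = \h_i$ for $i=1,2$. Using Proposition~\ref{manybasegraphs}, replace these if necessary so that $d_{G_1}(a_1,b_1) = d_{G_2}(a_2,b_2) = n+1$ for a common $n \geq 2$. Form $G$ by taking the disjoint union of $G_1$ and $G_2$ (assumed vertex-disjoint), identifying $a_1$ with $a_2$ to form a single vertex $a$, and identifying $b_1$ with $b_2$ to form a single vertex $b$.

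Next, I would verify that $V(S(G,a,b))$ corresponds exactly to $V(\h_1) \sqcup V(\h_2)$. Any path in $G$ from $a$ to $b$ that uses a vertex of $G_1 \setminus \{a,b\}$ and a vertex of $G_2 \setminus \{a,b\}$ must pass through $a$ or $b$ internally (since $\{a,b\}$ is the only set of vertices shared by the two halves), contradicting the fact that a path has no repeated vertices. Hence every $(a,b)$-geodesic in $G$ lies entirely in $G_1$ or entirely in $G_2$, and among such paths, the shortest ones in $G$ are precisely the $(a,b)$-geodesics of $G_1$ together with those of $G_2$ (because the common distance $n+1$ is realized in both halves).

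Then I would check the edge set. Two geodesics lying in the same $G_i$ are adjacent in $S(G,a,b)$ if and only if they are adjacent in $\h_i$, since adjacency depends only on the internal vertex sequences, which are intrinsic to $G_i$. For two geodesics $U \in V(\h_1)$ and $W \in V(\h_2)$, their internal vertices lie in disjoint vertex sets (namely $V(G_1) \setminus \{a,b\}$ and $V(G_2) \setminus \{a,b\}$), so $U$ and $W$ differ at all $n \geq 2$ index levels, hence $|\adj(U,W)| \geq 2$ and there is no edge between them. This is precisely the condition $S(G,a,b) = \h_1 \cup \h_2$.

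The only subtle point, which doubles as the main obstacle, is the case $n \leq 1$: if the chosen base graphs have $d(a,b) = 2$, then any two geodesics differ at the unique internal index level, making all of them pairwise adjacent and collapsing the construction to a single clique rather than a disjoint union. Proposition~\ref{manybasegraphs} is exactly what removes this obstruction, by letting us lengthen both base graphs to a common distance $n+1 \geq 3$ before performing the identification. Everything else reduces to the routine structural check sketched above.
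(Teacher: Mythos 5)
Your proposal is correct. Note that the paper does not prove this statement itself---Theorem~\ref{disconnect} is quoted from \cite{AAEHHNW}---but your argument (equalize the base-graph distances via Proposition~\ref{manybasegraphs} so that there are at least two internal index levels, then identify the endpoint pairs and observe that every $(a,b)$-geodesic lives entirely in one half and that cross-half geodesics differ in at least two index levels) is exactly the standard construction used there, including the correct handling of the degenerate single-index-level case.
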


\section{Structural Results}\label{prelims}

To begin characterizing SPGs with $3$-cycles but no induced $4$-cycles, we first analyze some structures that are forced by requiring no induced $4$-cycles and identify some substructures that are forbidden in SPGs. 
\begin{proposition} \label{3cycle_adj}
Let $\h$ be a SPG containing a $3$-cycle $(U_0,U_1,U_2)$. If $U \in V(\h)\setminus\{U_0,U_1,U_2\}$ with $U \sim U_0$, then either $U$ is adjacent to both $U_1$ and $U_2$ or neither of them.
\end{proposition}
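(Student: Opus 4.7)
The plan is to work in a base graph. Fix a graph $G$ with indicated vertices $a, b$ so that $\h = S(G,a,b)$, and view each of $U, U_0, U_1, U_2$ as an $(a,b)$-geodesic in $G$. The key structural fact I would invoke is the remark following Theorem~\ref{completeGraph}: since $(U_0, U_1, U_2)$ is a triangle, its three edges share a common difference index, call it $i$. Concretely, this means there are three distinct vertices $v_0, v_1, v_2$ at index level $i$ of $G$ such that $U_k$ agrees with the other two geodesics everywhere except index $i$, where it has $v_k$.

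Next I would do a two-case analysis on the difference index $j$ of the edge $UU_0$. In the first case, $j = i$. Then $U$ agrees with $U_0$ outside of index $i$, which (by the paragraph above) means $U$ agrees with each of $U_1$ and $U_2$ outside index $i$ as well. At index $i$, $U$ carries some vertex $v \notin \{v_0, v_1, v_2\}$ (it cannot equal $v_0$ since $U \not= U_0$, and cannot equal $v_1$ or $v_2$ since $U \ne U_1, U_2$), so $|\diff(U, U_1)| = |\diff(U, U_2)| = 1$ and $U$ is adjacent to both. In the second case, $j \ne i$. Then $U$ agrees with $U_0$ at index $i$, so $U$ carries $v_0$ there, hence $U$ already differs from $U_1$ at index $i$ (where $U_1$ has $v_1 \ne v_0$); additionally, $U$ differs from $U_0$ at index $j$ while $U_1$ agrees with $U_0$ at index $j$, forcing another disagreement at $j$. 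So $\{i, j\} \subseteq \diff(U, U_1)$, giving $|\diff(U,U_1)| \ge 2$ and $U \not\sim U_1$. The identical argument shows $U \not\sim U_2$.

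Together these two cases exhaust the possibilities and yield exactly the two conclusions of the proposition. I do not anticipate a real obstacle: the only subtlety is explicitly justifying that the three triangle edges share a common difference index, which is precisely the ``extended'' form of Theorem~\ref{completeGraph} already recorded in the text immediately after its statement. The rest is a direct comparison of which index levels each geodesic disagrees on, packaged as the definition of $\diff(\cdot, \cdot)$.
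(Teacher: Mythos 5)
Your proposal is correct and follows exactly the paper's argument: invoke Theorem~\ref{completeGraph} to get a common difference index $i$ for the triangle, then split on whether the difference index of $UU_0$ equals $i$. You simply spell out the details that the paper's two-sentence proof leaves implicit.
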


\begin{proof}
By Theorem~\ref{completeGraph}, we can assume that $U_0$, $U_1$, and $U_2$ pairwise differ in a single index level $i$. If $UU_0$ has difference index $i$ then $U\sim U_1$ and $U\sim U_2$. If $UU_0$ has difference index $j$ where $i\neq j$ then $U\not\sim U_1$ and $U\not\sim U_2$.
\end{proof}

There are other restrictions when examining the characteristics of SPGs. 
If $e$ is an edge in a graph $G$, then we denote the graph $G$ with the edge $e$ removed as $G-e$. 

\begin{corollary}\label{k4-e-free}
SPGs are $(K_4-e)$-free. (See Figure~{\normalfont\ref{forbiddenGraphs}}.)
\end{corollary}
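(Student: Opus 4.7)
The plan is to derive the corollary as a direct consequence of Proposition~\ref{3cycle_adj}, arguing by contradiction. Suppose a SPG $\h$ contains an induced copy of $K_4-e$; label its vertices $U_0, U_1, U_2, U_3$ so that the missing edge is $U_0U_3$. Then $(U_0, U_1, U_2)$ is a $3$-cycle in $\h$, and $U_3$ is a vertex outside this triangle with $U_3 \sim U_1$ and $U_3 \sim U_2$ but $U_3 \not\sim U_0$.

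Now I would apply Proposition~\ref{3cycle_adj} to the triangle $(U_0, U_1, U_2)$ together with the vertex $U_3$. Since $U_3 \sim U_1$, the proposition (after relabeling $U_0$ and $U_1$ to match its statement) forces $U_3$ to be adjacent to both of the remaining two triangle vertices $U_0$ and $U_2$, or to neither. However, by construction $U_3$ is adjacent to $U_2$ but not to $U_0$, which contradicts this dichotomy.

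There is essentially no obstacle here: the corollary is immediate from Proposition~\ref{3cycle_adj} once one recognizes that $K_4-e$ is precisely a triangle together with a fourth vertex adjacent to exactly two of its three vertices. The only care needed is the bookkeeping in applying the proposition symmetrically to any vertex of the triangle, which is why the triangle is stated as an unordered $3$-cycle in Proposition~\ref{3cycle_adj}.
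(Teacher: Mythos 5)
Your proof is correct and matches the paper's intent exactly: the corollary is stated without proof immediately after Proposition~\ref{3cycle_adj}, and the intended derivation is precisely your observation that an induced $K_4-e$ is a triangle plus a vertex adjacent to exactly two of its three vertices, violating the all-or-nothing dichotomy of that proposition.
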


We say a subgraph $H$ of $G$ is a maximal clique if $H$ is a clique and the induced subgraph of $V(H)\cup\{v\}$ in $G$ is not a clique for all $v\in V(G)\setminus V(H)$. A quick corollary of Proposition~\ref{3cycle_adj} which is useful for Lemma~\ref{characterCliques} is given below.

\begin{corollary}\label{atMostOne}
Let $\h$ be a SPG and let $\calk$ be a maximal clique in $\h$. Then each vertex in $V(\h)\setminus V(\calk)$ is adjacent to at most one vertex in $V(\calk)$. 
\end{corollary}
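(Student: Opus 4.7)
The plan is to prove the contrapositive by contradiction: assume there is some $U \in V(\h) \setminus V(\calk)$ adjacent to at least two vertices $W_1, W_2 \in V(\calk)$, and derive that $V(\calk) \cup \{U\}$ induces a clique, contradicting the maximality of $\calk$.

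Since $\calk$ is a clique, $W_1 W_2 \in E(\h)$, so the induced subgraph on $\{U, W_1, W_2\}$ is a $3$-cycle. I would then split into two cases based on the size of $\calk$. If $|V(\calk)| = 2$, then $V(\calk) \cup \{U\}$ already induces a triangle that strictly contains $\calk$, contradicting maximality immediately.

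If $|V(\calk)| \geq 3$, pick any $W \in V(\calk) \setminus \{W_1, W_2\}$. Since $\calk$ is a clique, $W \sim W_1$ and $W \sim W_2$. Now apply Proposition~\ref{3cycle_adj} to the $3$-cycle $(U, W_1, W_2)$ with the external vertex $W$: because $W \sim W_1$, the proposition forces $W$ to be adjacent either to both of $U$ and $W_2$ or to neither. The "neither" case is ruled out by $W \sim W_2$, so $W \sim U$. Since $W$ was an arbitrary vertex of $\calk \setminus \{W_1, W_2\}$, we conclude that $U$ is adjacent to every vertex of $\calk$, so $V(\calk) \cup \{U\}$ induces a clique strictly larger than $\calk$, again contradicting maximality.

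There is no real obstacle here; the argument is essentially an unpacking of Proposition~\ref{3cycle_adj}. The only subtlety is remembering the small case $|V(\calk)| = 2$, where the proposition cannot be applied (there is no third clique vertex $W$ to feed into it) but the conclusion is even easier, since the triangle $\{U, W_1, W_2\}$ itself already violates maximality.
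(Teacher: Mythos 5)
Your proof is correct and follows exactly the route the paper intends: the paper states this as ``a quick corollary of Proposition~\ref{3cycle_adj}'' without writing out the details, and your argument (applying that proposition to the triangle $(U,W_1,W_2)$ with each remaining clique vertex as the external vertex, then contradicting maximality) is precisely the intended unpacking. The separate treatment of $|V(\calk)|=2$ is a reasonable bit of care, though that case also follows from the same maximality contradiction without invoking the proposition at all.
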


The following lemma is pivotal to the constructions in Sections~\ref{girth3}. In particular, this result asserts there are limitations when it comes to the parts of the SPG which have maximal cliques. Let $E(G_1,G_2)$ be the set of edges joining a vertex in $G_1$ with a vertex in $G_2$. Note that depending on the definition of matching used, conditions $(i)$ and $(ii)$ in Lemma~\ref{characterCliques} could be merged together if matchings are allowed to be empty sets. We prefer to think of matchings as being non-empty sets since this allows us to highlight the distinct differences between the conditions listed below.

\begin{lemma}\label{characterCliques}
Let $\h$ be a SPG and let $\calk_1$ and $\calk_2$ be two distinct maximal cliques in $\h$. Then exactly one of the following must be true:
\begin{packedItem}
 \item[$(i)$] no vertex in $\calk_1$ is adjacent to a vertex in $\calk_2$;
 \item[$(ii)$] the set of edges that join vertices in $V(\calk_1)$ to vertices in $V(\calk_2)$ is a matching; or
 \item[$(iii)$] $|V(\calk_1)\cap V(\calk_2)|=1$.
\end{packedItem}
\end{lemma}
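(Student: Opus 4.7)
The plan is to classify the relationship between $\calk_1$ and $\calk_2$ by first bounding $|V(\calk_1)\cap V(\calk_2)|$ from above by $1$, and then applying Corollary~\ref{atMostOne} to handle the remaining cases.

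For the bound, I would argue by contradiction. Suppose two distinct vertices $U$ and $W$ both lie in $V(\calk_1)\cap V(\calk_2)$. The edge $UW$ belongs to both cliques, and by the remark following Theorem~\ref{completeGraph} every edge within a single maximal clique has the same difference index; hence $\calk_1$ and $\calk_2$ share a common difference index $i$. Any $X\in V(\calk_1)$ and any $Y\in V(\calk_2)$ then each agree with $U$ on every index other than $i$, so $X$ and $Y$ agree on every index except possibly $i$, meaning they are either equal or adjacent in $\h$. Thus $V(\calk_1)\cup V(\calk_2)$ induces a clique in $\h$, and since $\calk_1\neq\calk_2$ this clique properly contains at least one of $\calk_1$ or $\calk_2$, contradicting maximality.

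With $|V(\calk_1)\cap V(\calk_2)|\leq 1$ in hand, I split into cases. If the intersection has size exactly one, condition $(iii)$ holds. Otherwise the two cliques are vertex-disjoint, so every vertex of $\calk_2$ lies in $V(\h)\setminus V(\calk_1)$ and by Corollary~\ref{atMostOne} is adjacent to at most one vertex of $\calk_1$; the symmetric statement also holds. Therefore the edge set $E(V(\calk_1),V(\calk_2))$ is a matching, which yields $(i)$ if it is empty and $(ii)$ if it is nonempty. Mutual exclusivity of the three conditions follows from the paper's convention that matchings are nonempty (separating $(i)$ from $(ii)$) and from the fact that a shared vertex forces multiple cross-clique edges sharing a common endpoint (separating $(iii)$ from both $(i)$ and $(ii)$). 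The only substantive step in the entire argument is establishing the size bound on the intersection; everything after that is a direct application of Corollary~\ref{atMostOne}, so that initial step is the main obstacle.
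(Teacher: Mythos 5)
Your proof is correct, and its overall shape matches the paper's: bound the intersection by $1$, then dispatch the disjoint case with Corollary~\ref{atMostOne}. The one place you diverge is the intersection bound itself. The paper handles $|V(\calk_1)\cap V(\calk_2)|\geq 2$ in a single line by exhibiting an induced $K_4-e$ and citing Corollary~\ref{k4-e-free}; you instead work directly with difference indices, observing that a shared edge forces the two maximal cliques to have the same difference index, whence $V(\calk_1)\cup V(\calk_2)$ is itself a clique, violating maximality. Your route is self-contained and in fact fills in a detail the paper elides (to produce the $K_4-e$ one must first invoke maximality to find a vertex of one clique non-adjacent to a vertex of the other), at the cost of re-deriving structure that Corollary~\ref{k4-e-free} already packages. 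Your closing remarks on mutual exclusivity go slightly beyond what the paper bothers to argue, and they are sound: two distinct maximal cliques sharing a vertex each have at least two vertices, so the cross edges through the shared vertex preclude both emptiness and being a matching.
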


\begin{proof}
Suppose that $|V(\calk_1)\cap V(\calk_2)|\geq 2$. Then there is an induced $K_4-e$ in $\h$, contradicting Corollary~\ref{k4-e-free}. We see that $|V(\calk_1)\cap V(\calk_2)|=1$, is precisely case $(iii)$, so assume $|V(\calk_1)\cap V(\calk_2)|=0$. If $|E(\calk_1,\calk_2)|=0$, this falls into case $(i)$. Finally, suppose that $|E(\calk_1,\calk_2)|\geq 1$. By Corollary~\ref{atMostOne}, each vertex in $\calk_1$ is adjacent to at most one vertex in $\calk_2$ and vice versa. Thus there is a matching between $\calk_1$ and $\calk_2$, completing the proof. 
\end{proof}

We can say a bit more about condition $(iii)$ in Lemma~\ref{characterCliques}.

\begin{proposition} \label{adj_cliques}
Let $\calk_1$ and $\calk_2$ be distinct maximal cliques in a SPG that share a single vertex. Then, $\calk_1$ and $\calk_2$ have distinct difference indicies.
\end{proposition}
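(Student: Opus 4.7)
The plan is to argue by contradiction: suppose $\calk_1$ and $\calk_2$ are distinct maximal cliques sharing a single vertex $U$, and assume that they have the same difference index $i$. I will then combine the structural consequence of sharing $i$ with Corollary~\ref{atMostOne} to reach a contradiction. Note that since the cliques are distinct and $\calk_1 \cap \calk_2 = \{U\}$, each $\calk_j$ has at least two vertices, so the notion of a difference index for $\calk_j$ is well-defined.

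The first step is to extract the pointwise information. By Theorem~\ref{completeGraph} and the discussion after it, every geodesic belonging to $\calk_j$ agrees with $U$ at every index level except $i$. Consequently, picking any $V \in \calk_1 \setminus \{U\}$ and any $W \in \calk_2 \setminus \{U\}$, both $V$ and $W$ match $U$ outside of index $i$, and therefore $V$ and $W$ agree with each other at every index except possibly $i$.

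The next step is a short case analysis on what happens at index $i$. If $V$ and $W$ have the same vertex at that level, then $V = W$, placing this geodesic in $\calk_1 \cap \calk_2$ and forcing $|\calk_1 \cap \calk_2| \geq 2$, contrary to hypothesis. Hence $V$ and $W$ must differ at index $i$ only, which by definition of adjacency in $\h$ means $V \sim W$.

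Finally, I apply Corollary~\ref{atMostOne}. We have $W \notin V(\calk_1)$, since otherwise $\{U,W\} \subseteq \calk_1 \cap \calk_2$. But $W$ is adjacent to both $U \in \calk_1$ and $V \in \calk_1$, contradicting the corollary, which allows $W$ to have at most one neighbor inside $\calk_1$. This contradiction shows that $\calk_1$ and $\calk_2$ cannot share a difference index. I do not foresee a real obstacle: the only delicate point is ensuring $V$ and $W$ can be chosen distinct from $U$, which is guaranteed because each clique has size at least two.
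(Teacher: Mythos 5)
Your proof is correct and follows essentially the same route as the paper: both arguments note that if the two cliques shared difference index $i$, a vertex $W\in\calk_2\setminus\{U\}$ would agree with every vertex of $\calk_1$ outside index $i$ and hence be equal or adjacent to each of them, which is impossible. The paper phrases the contradiction as $W$ being adjacent to all of $V(\calk_1)$ (violating maximality), while you invoke Corollary~\ref{atMostOne}; these are the same structural obstruction.
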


\begin{proof}
Let $X$ be the vertex shared by $\calk_1$ and $\calk_2$ and let $i$ be the difference index of $\calk_1$. Then, let $W \in V(\calk_2)$. If $\diff(X,W) = \{i\}$, then $W$ is adjacent to all vertices in $V(\calk_1)$, a contradiction.
\end{proof}

\begin{figure}[htb]
\begin{center}
\includegraphics[scale=1]{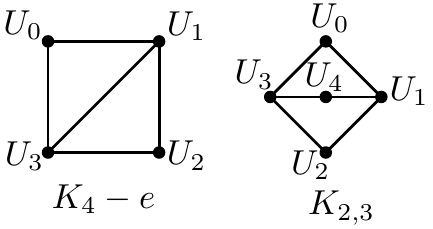}
\vspace{-.1in}
\end{center}
\caption{Forbidden induced graphs in SPGs}\label{forbiddenGraphs}
\end{figure}

The following proposition shows that $K_{2,3}$ is an induced subgraph that is forbidden in SPGs.

\begin{proposition}\label{f3}
SPGs are $K_{2,3}$-free.
\end{proposition}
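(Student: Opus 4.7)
The plan is to proceed by contradiction: assume $\h = S(G,a,b)$ contains an induced $K_{2,3}$ with parts $\{A,B\}$ and $\{X,Y,Z\}$, and derive a contradiction by exploiting Observation~\ref{4cycle_ob}. Since $A \not\sim B$ and $X, Y, Z$ are pairwise non-adjacent in $\h$ (by the induced-$K_{2,3}$ hypothesis), the three 4-cycles $A X B Y$, $A X B Z$, and $A Y B Z$ are all induced 4-cycles in $\h$. Each of them is therefore subject to Observation~\ref{4cycle_ob}, which forces the difference indices of their four edges to alternate between two distinct values.

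Next, I would set up notation by letting $\delta(e)$ denote the difference index of an edge $e$. Applying Observation~\ref{4cycle_ob} to the cycle $A X B Y$ produces distinct indices $i \neq j$ with $\delta(AX) = \delta(BY) = i$ and $\delta(XB) = \delta(YA) = j$. Applying it to $A X B Z$ then yields $\delta(AX) = \delta(BZ)$ and $\delta(XB) = \delta(ZA)$, so $\delta(BZ) = i$ and $\delta(ZA) = j$. Finally, applying it to $A Y B Z$ gives $\delta(AY) = \delta(BZ) = i$ and $\delta(YB) = \delta(ZA) = j$. But the very first cycle already forced $\delta(AY) = \delta(YA) = j$, contradicting $\delta(AY) = i$ with $i \neq j$.

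Thus there can be no induced $K_{2,3}$ in $\h$. The argument is really just careful bookkeeping with Observation~\ref{4cycle_ob}, so there is no significant obstacle; the only thing to watch is making sure each of the three chosen 4-cycles is induced, which follows immediately from the definition of induced $K_{2,3}$. This proof can be organized as a short direct contradiction, with the three applications of Observation~\ref{4cycle_ob} forming the bulk of the argument.
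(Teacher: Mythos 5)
Your proof is correct and follows essentially the same approach as the paper: both arguments track difference indices around the induced $4$-cycles inside the $K_{2,3}$ using Observation~\ref{4cycle_ob} and derive a contradiction. The only cosmetic difference is in the last step, where the paper applies the observation to two of the $4$-cycles and then concludes that two incident edges with equal difference index force an adjacency between $U_0$ and $U_2$ (violating inducedness), whereas you apply the observation a third time to reach the contradiction $i=j$ directly.
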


\begin{proof}
Let $\h$ be a SPG containing an induced subgraph $K_{2,3}$. Label the vertices of the induced $K_{2,3}$ as in Figure~\ref{forbiddenGraphs}. Let the 
difference indices of $U_1U_4$ and $U_3U_4$ be $i$ and $j$, respectively, where $i\neq j$ (by Observation~\ref{4cycle_ob}). Due to the structure of $4$-cycles in $\h$, $U_0U_3$ and $U_0U_1$ have difference indices $i$ and $j$, respectively. Similarly, $U_2U_3$ and $U_1U_2$ must also have difference indices $i$ and $j$, respectively.
But if $U_0U_1$ and $U_1U_2$ both have difference index $j$, then $U_0 \sim U_2$ and so $\h'$ is not isomorphic to $K_{2,3}$, a contradiction.
\end{proof}

The following proposition is not necessary for any of the following results; however, it is an interesting result on its own and follows from Corollary~\ref{k4-e-free} and Proposition~\ref{f3}.
We define the neighborhood of a vertex $v$ in $G$ as $\nbd_G(v)$. 

\begin{proposition}
Let $U$ and $W$ be non-adjacent vertices in a SPG $\h$. Then, exactly one of the following holds:
\begin{itemize}
\item $|\nbd_\h(U)\cap\nbd_\h(W)|\leq 1$; or
\item $|\nbd_\h(U)\cap\nbd_\h(W)|= 2$ and the vertices of $\nbd_\h(U)\cap\nbd_\h(W)$, $U$, and $W$ form an induced $C_4$. 
\end{itemize}
\end{proposition}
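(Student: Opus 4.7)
The plan is to reduce the statement to two straightforward forbidden-subgraph checks already established earlier in the paper: Proposition~\ref{f3} (no induced $K_{2,3}$) and Corollary~\ref{k4-e-free} (no induced $K_4-e$). First I would bound the size of $\nbd_\h(U)\cap\nbd_\h(W)$ by $2$, and then handle the equality case separately to extract the induced $C_4$.

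To bound $|\nbd_\h(U)\cap\nbd_\h(W)|$, I would suppose for contradiction that there exist three distinct common neighbors $X_1,X_2,X_3$. Since $U\not\sim W$ while each $X_i$ is adjacent to both $U$ and $W$, the six edges $UX_i$ and $WX_i$ for $i=1,2,3$ are all present. Then I would split on the edges among the $X_i$: if no $X_iX_j$ is an edge, the subgraph induced on $\{U,W,X_1,X_2,X_3\}$ is exactly $K_{2,3}$, contradicting Proposition~\ref{f3}; otherwise, some pair $X_i\sim X_j$ gives $\{U,W,X_i,X_j\}\cong K_4-e$ (with $UW$ as the missing edge), contradicting Corollary~\ref{k4-e-free}. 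Either case forces a contradiction, so $|\nbd_\h(U)\cap\nbd_\h(W)|\leq 2$.

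For the second bullet, assume $|\nbd_\h(U)\cap\nbd_\h(W)|=2$ with common neighbors $X_1,X_2$. The same $K_4-e$ argument applied to $\{U,W,X_1,X_2\}$ forces $X_1\not\sim X_2$, and then the induced subgraph on $\{U,X_1,W,X_2\}$ is precisely the $4$-cycle $UX_1WX_2U$. The two conclusions in the proposition are mutually exclusive by the value of $|\nbd_\h(U)\cap\nbd_\h(W)|$, so the ``exactly one'' phrasing follows automatically.

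I do not expect a real obstacle here; the main thing to be careful about is exhausting the edge cases among the common neighbors $X_1,X_2,X_3$ in the first step rather than tacitly assuming they are pairwise non-adjacent. Once that case analysis is set up, both forbidden induced subgraphs from Section~\ref{prelims} fire directly.
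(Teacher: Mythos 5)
Your proof is correct and follows essentially the same route as the paper: the paper also rules out three common neighbors by combining the $K_{2,3}$-free result (Proposition~\ref{f3}) with the $(K_4-e)$-free result (Corollary~\ref{k4-e-free}), and handles the two-neighbor case by the same $K_4-e$ argument. Your version merely phrases the first step as an explicit case split where the paper invokes Proposition~\ref{f3} directly to assert that some pair of the three common neighbors must be adjacent.
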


\begin{proof}
To get a contradiction, assume there are at least three vertices $Z_1$, $Z_2$, and $Z_3$, contained in $\nbd_\h(U)\cap \nbd_\h(W)$.  By Lemma~\ref{f3}, at least one pair of vertices among $Z_1$, $Z_2$, and $Z_3$ are adjacent.  Without loss of generality, assume $Z_1 \sim Z_2$.  But then $U$, $W$, $Z_1$, and $Z_2$ form an induced $(1,2,2)$-graph, a contradiction of Corollary~\ref{k4-e-free}.

Similarly, if $|\nbd_{\h}(U)\cap \nbd_\h(W)|=2$ and there is an edge between the two vertices in $\nbd_{\h}(U)\cap \nbd_\h(W)$, this forms an induced $(1,2,2)$-graph. Again, this is a contradiction of Corollary~\ref{k4-e-free}.
\end{proof}

\section{SPGs with 3-cycles but no Induced 4-cycles}\label{girth3}

As in \cite{AAEHHNW}, the \textit{one-sum} of two graphs $G$ and $H$ is defined as the graph formed by joining $G$ and $H$ at a single vertex and preserving the edges. 
A graph $\h$ is a \textit{tree of cliques} if 
\begin{packedItem}
\item $\h$ is $C_k$-free for $k\geq 4$, claw-free and 
\item given any two distinct maximal cliques $\h_1$ and $\h_2$ in $\h$, $|V(\h_1)\cap V(\h_2)|\leq 1$. 
\end{packedItem}
Figure~\ref{tree_of_cliques} shows an example of a tree of cliques. Note that trees of cliques can be built by starting with a single clique and repeatedly  attaching maximal cliques using one-sums. 
Also note that a tree of cliques can be disconnected.

\begin{figure}[htb]
\begin{center}
\includegraphics[scale=0.75]{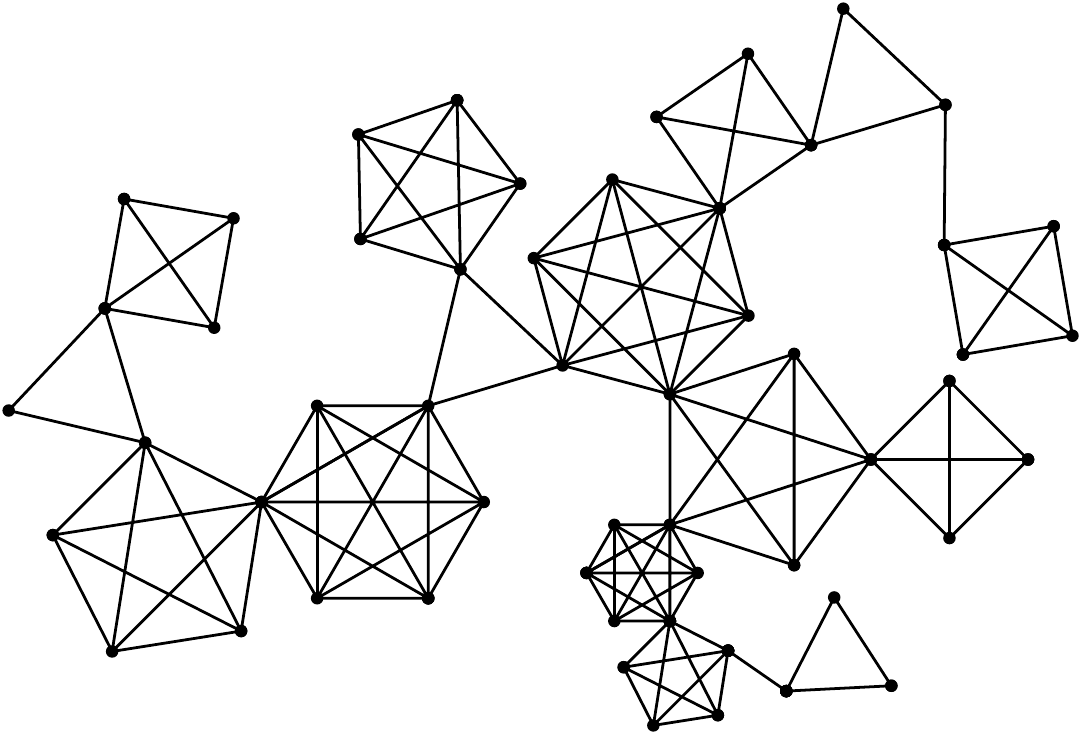}
\end{center}
\caption{A tree of cliques}\label{tree_of_cliques}
\end{figure}

\subsection{Trees of Cliques}\label{Sec:treeOfCliques}

In this section, it is shown that SPGs that are $C_k$-free for $k\geq 4$ are exactly all  trees of cliques. 
The following lemma shows that a clique can be added to a SPG to form another SPG under certain conditions. Note that this is a stronger result than is necessary to characterize SPGs that are $C_k$-free for any $k\geq 4$. Before stating the lemma, we need a definition.
Let $\h = S(G,a,b)$ be a SPG and let $U \in V(\h)$. If there is a vertex $v \in V(G)$ such that $U$ is the unique $(a,b)$-geodesic in $G$ passing though $v$, then $U$ is said to be the \textit{$v$-geodesic}. If it is not necessary to specify the vertex, then we will say that $U$ has the \textit{unique vertex-path property}.

\begin{lemma}\label{Thm:treeClique}
Let $\h = S(G,a,b)$ be a SPG and let $U \in V(\h)$ with the unique vertex-path property. 
Let $\s$ be the one-sum of $\h$ and a maximal clique $\calk$ formed by identifying $U$ with any vertex in $\calk$. 
Then $\s$ is a SPG.
Furthermore, the base graph $G'$ of $\s$ can be constructed from $G$ so that the following two conditions are satisfied:
\begin{packedEnum}
\item [(1)] if $(a,b)$-geodesics in $G$ have at least two index levels,
then 
$(a,b)$-geodesics in $G'$ have the same number of index levels;  
\item[(2)] any vertex in $\h$ other than $U$ that has the unique vertex-path property will still have the unique vertex-path property in $\s$, and every vertex in $\calk$ other than the one identified with $U$ will have the unique vertex-path property in $\s$.
\end{packedEnum}
\end{lemma}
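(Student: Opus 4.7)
The plan is to construct $G'$ from $G$ by adjoining $k := |V(\calk)|-1$ new vertices that serve as ``parallel copies'' of one vertex of $U$; the unique vertex-path property is precisely what makes this work. Let $v$ witness the UVP of $U$, and write $U = au_1\cdots u_nb$ with $v = u_{i_v}$. First I would extract two consequences of UVP. (a)~Every $W \in V(\h)\setminus\{U\}$ satisfies $i_v \in \diff(U,W)$: otherwise $W$ also passes through $v$, forcing $W = U$. In particular, every neighbor of $U$ in $\h$ has difference index exactly $i_v$. (b)~The $(a,u_j)$-geodesic in $G$ is unique for each $j \le i_v$: any alternative $(a,u_j)$-geodesic $P$ extends to an $(a,b)$-geodesic $P \cdot u_ju_{j+1}\cdots u_nb$ through $v$, which UVP forces to equal $U$, so $P = au_1\cdots u_j$. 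Symmetrically, the $(u_j,b)$-geodesic is unique for $j \ge i_v$.

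Next, assuming $n \ge 2$, I would pick $i^* \in \{i_v-1,i_v+1\}\cap\{1,\ldots,n\}$ (nonempty when $n\ge 2$) and form $G'$ by adjoining $k$ new vertices $v_1,\ldots,v_k$, each joined in $G'$ exactly to $u_{i^*-1}$ and $u_{i^*+1}$ (with $u_0 := a$, $u_{n+1} := b$). Because $|i^*-i_v| = 1$, the uniqueness results in~(b) apply at both $i^*-1$ and $i^*+1$, so the only $(a,b)$-geodesic in $G'$ through $v_j$ is $U_j := au_1\cdots u_{i^*-1}v_ju_{i^*+1}\cdots u_nb$, while every other $(a,b)$-geodesic in $G'$ is also one in $G$. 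For any $W\in V(\h)$ one computes $\diff(U_j,W) = \diff(U,W)\cup\{i^*\}$, which yields $U_j\sim U$ and $U_j\sim U_\ell$ for $j\ne\ell$, while $U_j\sim W$ for some $W\in V(\h)\setminus\{U\}$ would force $\diff(U,W)=\{i^*\}$; this is impossible since~(a) forces every $W$ with $|\diff(U,W)|=1$ to have $\diff(U,W)=\{i_v\}$ and $i_v\ne i^*$. Hence $\{U,U_1,\ldots,U_k\}$ induces a $K_{k+1}$ meeting $\h$ only at $U$, realizing the one-sum $\s$.

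For the extra conditions: (1)~no index level was added, so $(a,b)$-geodesics in $G'$ have $n$ levels, matching $G$; and (2)~each $U_j$ has UVP via $v_j$ (its only $(a,b)$-geodesic in $G'$), while any $V\in V(\h)\setminus\{U\}$ with witness $v_V\in V(G)$ satisfies $v_V\notin V(U)$ (else $U$ would pass through $v_V$, contradicting uniqueness of $V$) and $v_V\ne v_\ell$, so $v_V\notin V(U_j)$, and the geodesics through $v_V$ are unchanged in $G'$. When $n=1$, condition~(1) is vacuous, so I would first invoke Proposition~\ref{manybasegraphs} (e.g., by appending a single pendant edge at $b$) to obtain a base graph $G''$ of $\h$ with at least two index levels in which UVP of every witness is preserved, then apply the construction above to $G''$. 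The main obstacle of the argument is the risk that some $W\in V(\h)\setminus\{U\}$ differs from $U$ only at the chosen index $i^*$, producing a spurious edge $U_jW$; the UVP hypothesis is exactly what makes $i^*\ne i_v$ a ``free'' index and rules out this possibility.
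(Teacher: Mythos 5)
Your proof is correct and takes essentially the same approach as the paper's: both handle the single-index-level case via Proposition~\ref{manybasegraphs}, and otherwise adjoin $|V(\calk)|-1$ new vertices as parallel copies of a vertex of $U$ at an index level adjacent to that of the unique-vertex-path witness, using that property to show exactly the intended geodesics and adjacencies arise. Your version is, if anything, slightly more explicit than the paper's about why no spurious geodesics or edges are created.
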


\begin{proof}
Let $U=av_1\cdots v_i\cdots v_pb$, where $U$ is the $v_i$-geodesic. 
If $p=1$, by Proposition~\ref{manybasegraphs}, the length of $(a,b)$-geodesics in $G$ can be extended to any length greater than or equal to $1$ without affecting the SPG, so we may assume $\h$ and $\kcal$ have the same number of index levels. 
Thus, we may assume that $p \geq 2$.
The one-sum of $\h$ and $\calk = K_n$ is the SPG, $S(G',a,b)$, where $G'$ is constructed as follows for $i< p$. Let $V(G') = V(G) \cup \{w_1,\ldots,w_{n-1}\}$ and $E(G') = E(G) \cup \{v_{i}w_j \,|\, j  = 1, \ldots, n-1\} \cup \{w_jv_{i+2} \,|\, j  = 1, \ldots, n-1\}$.  Because $U$ is the unique $(a,b)$-geodesic passing through $v_i$, $G'$ has exactly $n-1$ more $(a,b)$-geodesics than $G$, each of the form $W_j=av_1\cdots v_{i}w_jv_{i+2}\cdots v_pb$, $j = 1,\ldots,n-1$. All of these new $(a,b)$-geodesics are adjacent to one another and to $U$, but are not adjacent to any other $(a,b)$-geodesics in $G$ as desired.

The first condition above is clearly satisfied because if $p \geq 2$, $G'$ is constructed from $G$ by adding vertices at a single existing index level of $G$.
To see the second condition above is satisfied, note that the only vertices in $V(G)$ that are part of new paths in $G'$ are the vertices $\{v_j \, | \,j=1,\ldots,p, j \neq i+1\}$. Because $U$, the $v_i$-geodesic, passes through all the vertices in this set, no other $(a,b)$-geodesic in $\h$ could be the unique $(a,b)$-geodesic passing through one of these vertices.
In addition, recall that the vertices in $V(\calk) \setminus \{U\}$ correspond to $(a,b)$-geodesics of the form $W_j=av_1\cdots v_{i}w_jv_{i+2}\cdots v_pb$, $j = 1,\ldots,n-1$. For each $j = 1,\ldots,n-1$, $W_j$ is the $w_j$-geodesic, and thus the second condition is satisfied. In the case $i = p$, $E(G') = E(G) \cup \{v_{i-2}w_j \,|\, j  = 1, \ldots, n-1\} \cup \{w_jv_{i} \,|\, j  = 1, \ldots, n-1\}$ rather than the set defined above, and the intended SPG is formed by an analogous argument to that above.
\end{proof}

\begin{theorem}\label{Thm:forestOfCliques}
Let $\h$ be a $C_k$-free graph for all $k\geq 4$. Then $\h$ is a SPG if and only if $\h$ is a tree of cliques.
\end{theorem}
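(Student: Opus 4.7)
The plan is to prove both directions separately: the forward direction (SPG $\Rightarrow$ tree of cliques) is short and uses the preliminary lemmas directly, while the backward direction (tree of cliques $\Rightarrow$ SPG) will be an induction on the number of maximal cliques built around Lemma~\ref{Thm:treeClique}.

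For the forward direction, I would assume $\h$ is a $C_k$-free SPG for all $k\geq 4$ and verify the three defining properties of a tree of cliques. The condition of being $C_k$-free for $k\geq 4$ is given. Claw-freeness is immediate from Theorem~\ref{noClaw}: an induced claw would force an induced $C_4$, contradicting the hypothesis. For the condition that any two distinct maximal cliques $\calk_1,\calk_2$ share at most one vertex, I would apply Lemma~\ref{characterCliques}: case (i) gives an empty intersection, case (iii) gives an intersection of size exactly one, and case (ii) of a matching of size $\geq 2$ produces an induced $C_4$ (two matching edges $U_1V_1, U_2V_2$ together with $U_1U_2\in\calk_1$ and $V_1V_2\in\calk_2$ form the $4$-cycle), contradicting $C_4$-freeness; so when case (ii) holds the matching has exactly one edge and the intersection is still empty.

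For the backward direction, I plan to induct on the number $m$ of maximal cliques using the \emph{strengthened} hypothesis that every connected tree of cliques $\h$ is realizable as $S(G,a,b)$ in which every vertex has the unique vertex-path property. The base case $m=1$, where $\h=K_n$, is handled by the ``book'' graph with $V(G)=\{a,b,v_1,\ldots,v_n\}$ and edges $\{av_i,v_ib\}$, for which every $(a,b)$-geodesic is a $v_i$-geodesic and hence has UVP. For the inductive step, a connected tree of cliques with $m\geq 2$ maximal cliques has a nontrivial block-cut-vertex decomposition (its maximal cliques coincide with its blocks) and therefore a leaf block, i.e., a maximal clique $\calk$ attached to the rest of the graph at a single cut vertex $U$. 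Deleting $V(\calk)\setminus\{U\}$ yields a smaller connected tree of cliques $\h'$, which by induction is realized as $S(G',a,b)$ with $U$ (and every other vertex) possessing UVP; Lemma~\ref{Thm:treeClique} then produces a base graph $G$ whose SPG is the one-sum of $\h'$ and $\calk$ at $U$, which equals $\h$. A glance at the construction in the proof of that lemma shows $U$ also retains UVP in the new graph, since it is the unique $(a,b)$-geodesic through $v_{i+1}$ (the newly added geodesics use $w_j$ in place of $v_{i+1}$), so the strengthened hypothesis is preserved and the induction closes. Disconnected trees of cliques are handled component-wise using Theorem~\ref{disconnect}.

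The main obstacle I anticipate is maintaining the unique vertex-path property at the attachment vertex $U$ through every inductive step, because the stated form of Lemma~\ref{Thm:treeClique} is explicitly silent about UVP at $U$. I resolve this by strengthening the inductive hypothesis to require UVP at \emph{every} vertex and then checking directly in the construction that $v_{i+1}$ serves as a new witness for $U$'s UVP; combined with the book-graph base case (where every vertex has UVP) and Lemma~\ref{Thm:treeClique}'s guarantee for the other vertices, the strengthened hypothesis propagates through the induction. A secondary routine check is that a connected tree of cliques with at least two maximal cliques actually has a leaf clique in the block-tree sense; this holds because maximal cliques of size $\geq 3$ are 2-connected (hence are blocks) and maximal cliques of size $2$ that are bridges are also blocks, so the set of maximal cliques coincides with the set of blocks and the block tree is a nontrivial tree with a leaf.
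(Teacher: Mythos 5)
Your forward direction is fine and matches the paper's. The backward direction follows the paper's strategy (build the graph one maximal clique at a time via Lemma~\ref{Thm:treeClique}), but your strengthened inductive hypothesis --- that \emph{every} vertex of the realization has the unique vertex-path property --- is not merely unproven, it is false, and the specific justification you give for it does not hold. Consider the bowtie: two triangles $\calk_1=\{X,A_1,A_2\}$ and $\calk_2=\{X,B_1,B_2\}$ sharing the cut vertex $X$. In any realization, $\calk_1$ and $\calk_2$ have distinct difference indices $i\neq j$ (Proposition~\ref{adj_cliques}); writing $X=ax_1\cdots x_nb$, the geodesic $A_1$ passes through $x_\ell$ for every $\ell\neq i$, and $B_1$ passes through $x_i$ since $i \neq j$. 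Hence every vertex of $X$ lies on some other geodesic and $X$ can never have the unique vertex-path property. So after your very first inductive step the hypothesis already fails for the cut vertex. Your proposed witness also fails concretely: you claim $U$ remains the unique geodesic through $v_{i+1}$ because the new geodesics $W_j$ avoid it, but geodesics already present in $\h'$ may pass through $v_{i+1}$ --- in the base-case book graph every geodesic $av_kwb$ passes through $w=v_{i+1}$, so after attaching a clique at $U_1=av_1wb$ the vertex $U_1$ has no witness at all.

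The repair is exactly the weaker invariant the paper records in Observation~\ref{cliqueParity}: maintain the unique vertex-path property only for vertices lying in \emph{exactly one} maximal clique. This suffices for your induction because claw-freeness forces every vertex of a tree of cliques to lie in at most two maximal cliques (three maximal cliques meeting at $X$ would yield an induced claw on $X$ and one private neighbor from each), so when you delete a leaf block $\calk$ with cut vertex $U$, the vertex $U$ lies in exactly one maximal clique of $\h'$ and therefore has the unique vertex-path property there by the weakened hypothesis; Lemma~\ref{Thm:treeClique} then applies, and its condition $(2)$ shows the weakened invariant propagates (the only vertex that may lose the property is $U$, which now lies in two maximal cliques and is exempt). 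With that modification your leaf-block induction is a correct, slightly more formal rendering of the paper's ``repeatedly apply Lemma~\ref{Thm:treeClique}'' argument.
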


\begin{proof}
By assumption, $\h$ is $C_k$-free for $k \geq 4$. Thus, if $\h$ is a SPG, it follows from Theorem~\ref{noClaw} that $\h$ is claw-free. By Lemma~\ref{characterCliques}, $\h$ satisfies the remaining conditions to be a tree of cliques.

For the other direction of the proof, because the disjoint union of SPGs is a SPG by Theorem~\ref{disconnect}, only a single connected SPG need be considered. 
Let $G$ be a base graph defined as follows: $V(G) = \{a,v_1, v_2, \ldots, v_n, w, b\}$ and $E(G) = \{av_i, v_iw | i = 1, \ldots,n\} \cup \{wb\}$. It is easily verifiable that $S(G,a,b) = K_n$. Additionally, each of the $n$ $(a,b)$-geodesics in $G$ is the $v_i$-geodesic for some $i = 1, \ldots, n$, so every vertex in $S(G,a,b)$ satisfies the unique vertex-path property. Using the construction in Lemma~\ref{Thm:treeClique}, it is possible to a create a base graph whose SPG is the one-sum of $S(G,a,b)$ and any clique. In this construction, any vertex in $S(G,a,b)$ that had the unique vertex-path property prior to the construction (other than the vertex being identified in the one-sum) still has the unique vertex-path property in the one-sum. Additionally, each new vertex added to the SPG also has the unique vertex-path property. 
This can be seen in Figure~\ref{tree_of_clique_ex}.
Thus, by repeatedly applying Lemma~\ref{Thm:treeClique}, it is possible to create any such connected SPG. 
\end{proof}

\begin{figure}[htb]
\begin{center}
	\includegraphics[scale=0.75]{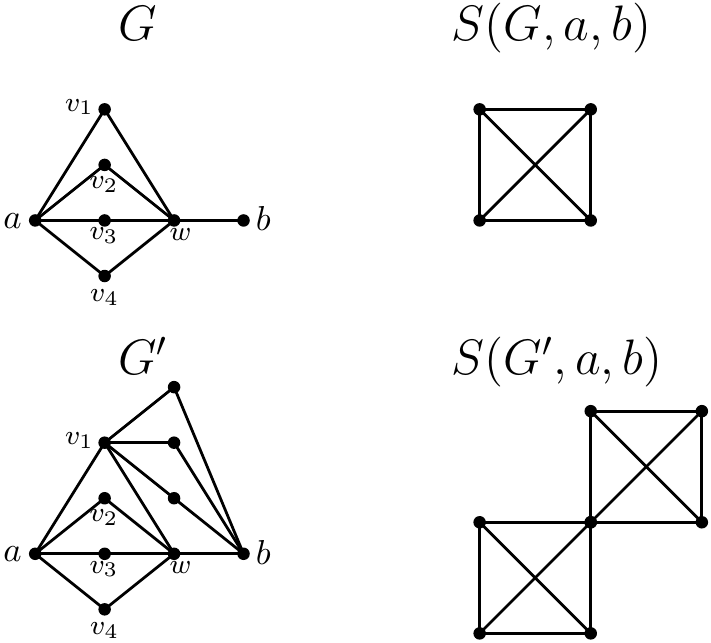}
\end{center}
\caption{Example of identifying a vertex in an SPG with that of a vertex in the clique we are adding to the SPG}\label{tree_of_clique_ex}
\end{figure}

The following observations will be useful when we generalize this result in the next section.

\begin{obs} \label{cliqueParity}
Let $\h = S(G,a,b)$ be a SPG that is a tree of cliques whose base graph $G$ is constructed as in the proof of Theorem~{\normalfont\ref{Thm:forestOfCliques}}. Then, the following properties hold:
\begin{itemize}
\item any vertex in $\h$ that is in exactly one maximal clique has the unique vertex-path property;
\item the base graph of $\h$ can be constructed so that every $(a,b)$-geodesic in $G$ has exactly two index levels;
\item if $\calk_1$ and $\calk_2$ are two maximal cliques in $\h$ that share a single vertex, then the difference index of $\calk_1$ is $i$ and the difference index of $\calk_2$ is $i+1$, or vice versa.
\end{itemize}
\end{obs}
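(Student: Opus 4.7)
The plan is to prove all three items simultaneously by induction on the sequence of one-sums described in the proof of Theorem~\ref{Thm:forestOfCliques}. The base case is the initial single clique $K_n$ with base graph on vertex set $\{a,v_1,\ldots,v_n,w,b\}$ and edges $\{av_i, v_iw\} \cup \{wb\}$: every path $av_iwb$ is the $v_i$-geodesic (so item one holds), every $(a,b)$-geodesic has exactly two index levels (item two), and item three is vacuous since there is only one maximal clique. For the inductive step, suppose $\h_0$ satisfies all three items and $\h_1$ is obtained from $\h_0$ by one-summing a new maximal clique $\kcal$ at a vertex $U \in V(\h_0)$ possessing the unique vertex-path property (which is permitted by item one for $\h_0$).

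For item one, Lemma~\ref{Thm:treeClique}(2) shows that $U$ is the only vertex of $\h_0$ to lose the unique vertex-path property and that every vertex of $\kcal$ other than $U$ has it in $\h_1$. Since $U$ is simultaneously the only vertex of $\h_0$ whose clique-count rises from one to two, while every new vertex of $\kcal$ lies only in $\kcal$, the equivalence between \emph{``lies in exactly one maximal clique''} and \emph{``has the unique vertex-path property''} is preserved in $\h_1$. Item two is immediate from Lemma~\ref{Thm:treeClique}(1), since the construction preserves the number of index levels and the base case starts at two.

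The main obstacle is item three, which requires tracking how the difference indices of cliques relate to the indices of the unique vertices of their members. The key intermediate claim is: \emph{if $V$ is the $v$-geodesic for some $v$ at index $i$ and $V$ lies in a maximal clique $\kcal_0$ of size at least two, then the difference index of $\kcal_0$ equals $i$.} Otherwise every path in $\kcal_0$ would agree with $V$ at position $i$ and hence pass through $v$, contradicting the uniqueness of $V$. Granted this claim, apply it to the current one-sum: $U$ is the $v$-geodesic for some $v$ at index $i \in \{1,2\}$, so the clique $\kcal_1$ previously containing $U$ has difference index $i$. By the explicit edge construction in the proof of Lemma~\ref{Thm:treeClique}, the $(a,b)$-geodesics newly introduced into $\kcal$ differ from $U$ at index $i+1$ when $i<p$, and at index $i-1$ when $i=p$; with $p=2$ this alternative index is exactly $3-i$, so $\kcal$ has difference index $3-i$. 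Hence $\kcal_1$ and $\kcal$ have difference indices differing by one. Every pair of vertex-sharing maximal cliques in $\h_1$ is either inherited from $\h_0$ (covered by the inductive hypothesis) or arises as this $\{\kcal_1,\kcal\}$ pair, so item three follows.
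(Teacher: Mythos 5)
Your proof is correct. The paper gives no argument for this Observation at all---it is stated as an unproved consequence of the construction in Theorem~\ref{Thm:forestOfCliques}---so your induction along the sequence of one-sums is exactly the justification the authors leave implicit, made explicit. Two small remarks. First, you slightly overstate Lemma~\ref{Thm:treeClique}(2): it guarantees that every vertex other than $U$ retains the unique vertex-path property and that every new vertex of $\calk$ has it, but it does not assert that $U$ \emph{loses} it; fortunately the Observation only claims the implication ``in exactly one maximal clique $\Rightarrow$ unique vertex-path property,'' so the one direction you actually need survives without that assertion. Second, your key intermediate claim for item three (that a clique of size at least two containing the $v$-geodesic must have difference index equal to the index of $v$) is a nice and correct argument, but once item two is established there is a shorter route: with only two index levels every difference index lies in $\{1,2\}$, and Proposition~\ref{adj_cliques} already says two maximal cliques sharing a vertex have distinct difference indices, so they must be $1$ and $2$. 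You should also dispose of the degenerate case where $U$ is a singleton maximal clique of $\h_0$, in which the one-sum creates no new pair of vertex-sharing cliques; this is trivial but worth a sentence.
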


\subsection{Tools To Build New SPGs}\label{Sec:cyclesOfCliques}

Building upon the work in Section~\ref{Sec:treeOfCliques}, we develop tools similar to Lemma~\ref{Thm:treeClique} and Theorem~\ref{Thm:forestOfCliques} 
that will characterize all SPGs that possibly contain a $3$-cycle but no induced $4$-cycle.
Recall that in \cite{AAEHHNW}, it was shown that the only SPGs with girth five or more are even cycles or paths. 
We begin by defining three constructions involving connecting two cliques. 
These constructions will be used to build new SPGs from old SPGs.
Although Constructions~\ref{const:A} and~\ref{const:B} are not used in the proof of the main theorem in Section~\ref{sec:mainthm} (Construction~\ref{const:C} is the only one needed for the proof), 
they provide intuitive descriptions of how to build the base graph of a given SPG. 

\begin{construction}
\label{const:C}
Let $\ucal$ and $\wcal$ be vertex disjoint maximal cliques in a SPG $\h$,
and let $X$ be a vertex that is distinct from the vertices in $\h$. 
Then put an edge between $X$ and all vertices in both $\ucal$ and $\wcal$. 
Construction~\ref{const:C} is shown in Figure~\ref{small_ear_of_cliques3}.
\end{construction}

\begin{figure}[htb]
\begin{center}
 \includegraphics[scale=1]{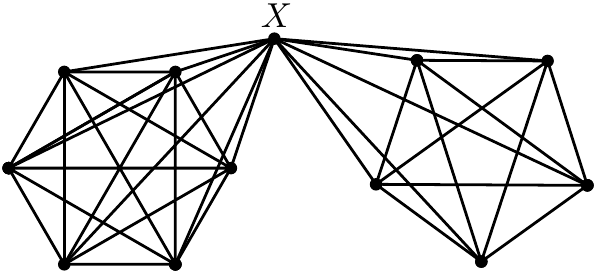}
\end{center}
\caption{Construction~\ref{const:C}}\label{small_ear_of_cliques3}
\end{figure}

\begin{lemma}\label{lem:small_ears_of_cliques}
Let $\h = S(G,a,b)$ be a SPG with exactly two index levels, and let $\ucal$ and $\wcal$ be two disjoint maximal cliques in $\h$ with difference indicies of opposite parity.
Then a SPG $\h'$ can be formed from $\h$ as described in Construction~\ref{const:C} (e.g., Figure~{\normalfont\ref{small_ear_of_cliques3}}).
Furthermore, any vertices in $\h$ that satisfied the unique vertex-path property prior to the construction still satisfy this property in $\h'$.
\end{lemma}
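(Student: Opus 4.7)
The plan is to realize $\h'$ explicitly as a SPG by producing a base graph for it. After noting that with two index levels the only possible difference indices are $1$ and $2$, I would assume without loss of generality that $\ucal$ has difference index $1$ (so its paths share a common index-$2$ vertex $u$) and $\wcal$ has difference index $2$ (sharing a common index-$1$ vertex $w$), so
\[
\ucal = \{av_1^{(i)}ub : i=1,\ldots,m\}, \qquad \wcal = \{awv_2^{(j)}b : j=1,\ldots,n\}.
\]
The first step is to record the non-adjacencies in $G$ forced by $d_G(a,b)=3$ together with disjointness and maximality of the cliques: namely $a\not\sim u$, $w\not\sim b$, and crucially $w\not\sim u$, since otherwise $awub$ would be an $(a,b)$-geodesic in $G$ lying in both $\ucal$ and $\wcal$. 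The same reasoning gives $u\notin\{v_2^{(j)}\}$ and $w\notin\{v_1^{(i)}\}$.

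Next I would construct a base graph $G'$ for $\h'$ in two stages. First, invoke Proposition~\ref{manybasegraphs} by attaching a new vertex $b'$ adjacent only to $b$; this extends the base graph to three index levels without altering the SPG. Then add the single edge $wu$. A short distance check gives $d_{G'}(a,b')=4$, and the only new $(a,b')$-geodesic introduced by the edge $wu$ is $X := awubb'$; every other $(a,b')$-geodesic is simply the extension $Yb'$ of an $(a,b)$-geodesic $Y$ in $G$.

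The verification is a direct case check. The path $X$ differs from each extended $U_i = av_1^{(i)}ubb'$ only at index $1$ (as $w \neq v_1^{(i)}$) and from each extended $W_j = awv_2^{(j)}bb'$ only at index $2$ (as $u \neq v_2^{(j)}$), so $X$ is adjacent to every vertex of $\ucal \cup \wcal$. For any extended geodesic $Yb'$ with $Y \notin \ucal \cup \wcal$, maximality forces the index-$1$ vertex of $Y$ to differ from $w$ and the index-$2$ vertex of $Y$ to differ from $u$, so $X$ differs from $Yb'$ in two indices and is non-adjacent. Adjacencies among the extended $\h$-geodesics are inherited from $\h$ itself, giving $S(G',a,b') \cong \h'$. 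For preservation of the unique vertex-path property, the key point is that $|\ucal|,|\wcal| \geq 2$, so multiple $(a,b)$-geodesics in $G$ pass through each of $u$ and $w$; hence any witness $v$ for a pre-existing unique vertex-path property must satisfy $v \notin \{u,w\}$. Since the new geodesic $X$ uses only $a,w,u,b,b'$, such a witness continues to certify the property in $G'$.

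The main obstacle I anticipate is finding the right base graph: a direct attempt to add $X$ inside the original two-index-level base fails, because the only candidate $(a,b)$-geodesic adjacent to all of $\ucal \cup \wcal$ would be $awub$, which by maximality must already lie in $\ucal \cap \wcal$, collapsing the two cliques rather than bridging them. The trick of first extending via Proposition~\ref{manybasegraphs} and only then adding the edge $wu$ is what allows $X$ to sit alongside the two cliques without merging them.
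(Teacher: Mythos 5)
Your construction is correct and does establish the lemma, but it diverges from the paper's proof in one telling way: the paper adds the single edge $wu$ (its $vv'$) \emph{directly} to the two-index-level base graph $G$, with no prior extension. The unique new $(a,b)$-geodesic $awub$ is then exactly the vertex $X$ of Construction~\ref{const:C}: it differs from each $au_ivb\in\ucal$ only at index $1$, from each $av'w_jb\in\wcal$ only at index $2$, and in both indices from every other geodesic, by the same maximality argument you give. The obstacle you describe at the end is not real. That $awub$ forms a clique with all of $\ucal$ and with all of $\wcal$ does not ``collapse'' the two cliques: since no $U_i$ is adjacent to any $W_j$, the maximal cliques of the \emph{new} SPG containing $X$ are $\ucal\cup\{X\}$ and $\wcal\cup\{X\}$, which intersect in the single vertex $X$ --- precisely the configuration Construction~\ref{const:C} is meant to produce (compare Proposition~\ref{adj_cliques}). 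Maximality of $\ucal$ and $\wcal$ is a property of $\h$, not of $\h'$, so it cannot force the new geodesic ``into $\ucal\cap\wcal$.'' Your detour through Proposition~\ref{manybasegraphs} is therefore harmless but unnecessary, and it has a concrete cost: it yields a base graph with three index levels, whereas the proof of Theorem~\ref{no4cycles_classification} applies this lemma inside an induction that needs the new base graph to again have exactly two index levels. The direct construction delivers that for free; yours does not. Everything else --- the forced non-adjacencies, the case check for the neighbourhood of $X$, and the preservation of the unique vertex-path property via $|\ucal|,|\wcal|\ge 2$ --- matches the paper's (terser) argument and is carried out correctly.
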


\begin{proof}
Without loss of generality, let $1$ be the index level of $\ucal$ and $2$ be the index level of $\wcal$. Let 
\[
U_i = au_ivb \;\;\; \text{ and } \;\;\; W_j=av'w_jb
\]
be the vertices of $\ucal$ and $\wcal$ for all $i\in\{1,2,\ldots,|\ucal|\}$ and $j\in\{1,2,\ldots,|\wcal|\}$. 
Then we build the base graph $G'$ from $G$ by letting $V(G')=V(G)$ and $E(G')=E(G)\cup \{vv'\}$. 
Under this construction, $S(G',a,b)$ is the graph $\h'$ as described in Construction~\ref{const:C}. Since there are two index levels, it is clear that there are no additional $(a,b)$-geodesics besides $X$. 
The only edges must join $X$ to all vertices in $\ucal$ and $\wcal$ for the same reason.
\end{proof}

\begin{construction}
\label{const:A}
Let $U$ and $W$ be non-adjacent vertices in a SPG $\h$ that are each in exactly one maximal clique. 
Let $\calk_1$ and $\calk_2$ be two disjoint maximal cliques with $|V(\calk_i)| \geq 2$ for $i =1,2$. Let $X$ and $X'$ be distinct vertices in $V(\calk_1)$ and let $Y$ and $Y'$ be distinct vertices in $V(\calk_2)$. Then, build a new graph from $\h$, $\calk_1$, and $\calk_2$ by identifying $X$ with $U$, $Y$ with $W$, and $X'$ with $Y'$. Construction~\ref{const:A} is shown in Figure~\ref{small_ear_of_cliques}. 
\end{construction}

\begin{figure}[htb]
\begin{center}
 \includegraphics[scale=1]{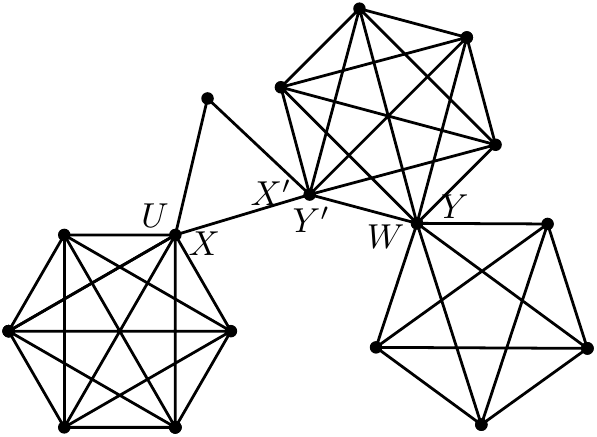}
\end{center}
\caption{Construction~\ref{const:A}}\label{small_ear_of_cliques}
\end{figure}

\begin{construction}
\label{const:B}
Let $U$ and $W$ be non-adjacent vertices in a SPG $\h$ that are each in exactly one maximal clique. Let $\ucal$ and $\wcal$ be vertex disjoint maximal cliques that contain $U$ and $W$, respectively.  
Let $\calk$ be a clique with $|V(\calk)| \geq 2$. Let $X$ and $X'$ be distinct vertices in $V(\calk)$. Then, build a new graph from $\h$ and $\calk$ by identifying $X$ with $U$ and putting an edge between $X'$ and every vertex in $\wcal$. Alternatively, we could build a different graph from $\h$ and $\calk$ by identifying $X$ with $W$ and put an edge between $X'$ and every vertex in $\ucal$. Construction~\ref{const:B} is shown in Figure~\ref{small_ear_of_cliques2}.
\end{construction}

\begin{figure}[htb]
\begin{center}
 \includegraphics[scale=1]{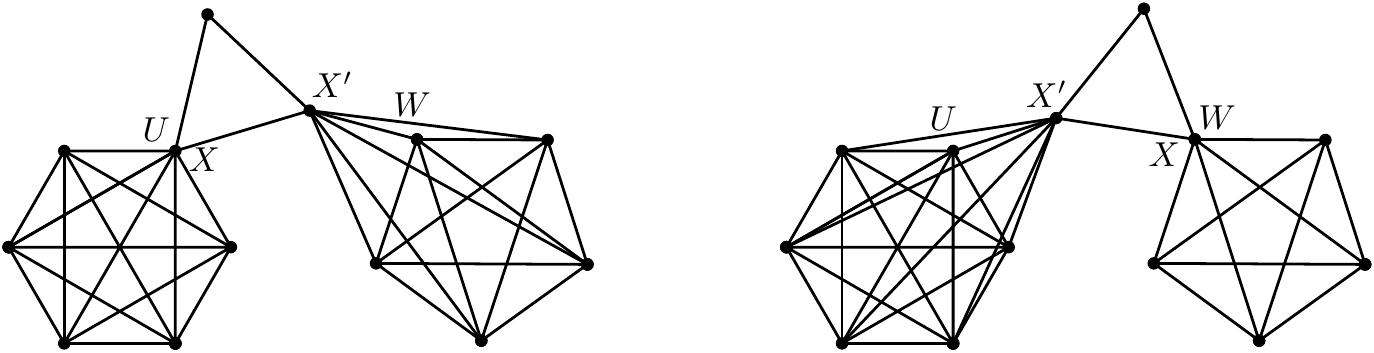}
\end{center}
\caption{Construction~\ref{const:B}}\label{small_ear_of_cliques2}
\end{figure}

\begin{lemma}\label{small_ears_of_cliques}
Let $\h = S(G,a,b)$ be a SPG and let $U$ and $W$ be non-adjacent vertices in $V(\h)$.
Suppose that $U$ is the $u$-geodesic and $W$ is the $w$-geodesic for vertices $u,w \in V(G)$. If $|\diff(U,W)| \leq 2$ 
and $i_1$ and $i_2$ are the indicies of $u$ and $w$, respectively, then the graph $\h'$ formed from $\h$ by joining $U$ and $W$ to cliques in the following ways is an SPG:

\begin{packedEnum} 
\item[(1)] If $i_1 \neq i_2$, then $\h'$ is the graph defined in Construction~\ref{const:A} (e.g., Figure~{\normalfont\ref{small_ear_of_cliques}}).
\item[(2)] If $i_1 = i_2$, then $\h'$ is one of the graphs defined in Construction~\ref{const:B} (e.g., Figure~{\normalfont\ref{small_ear_of_cliques2}}).
\end{packedEnum}

Furthermore, any vertices in $V(\h)\setminus\{U,W\}$ (Construction~\ref{const:A} only), or either $V(\h)\setminus\{U\}$ or $V(\h)\setminus\{W\}$ (Construction~\ref{const:B} only), and vertices in the new cliques not including $X,X',Y,Y'$ that satisfied the unique vertex-path property prior to the use of Construction~\ref{const:A} or~\ref{const:B} will still satisfy this property in $\h'$.
\end{lemma}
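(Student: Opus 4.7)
The plan is to construct an explicit base graph $G'$ of $\h'$ by modifying the base graph $G$ of $\h$. Two preliminary observations apply to both cases. First, since $U\not\sim W$, we have $|\diff(U,W)|\geq 2$, and combined with the hypothesis this forces $|\diff(U,W)|=2$. Second, the unique vertex-path properties of $U$ and $W$ force $i_1,i_2\in\diff(U,W)$, since no $(a,b)$-geodesic other than $U$ passes through $u$ (so in particular $W$ does not) and similarly for $w$. After this setup, I split along the lemma's two cases.

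In Case $(1)$, where $i_1\neq i_2$, we have $\diff(U,W)=\{i_1,i_2\}$ and $U,W$ share a common spine outside these two indices. Writing $U$ with $u$ at $i_1$ and $x_2$ at $i_2$, and $W$ with $y_1$ at $i_1$ and $w$ at $i_2$, I build $G'$ from $G$ by (i) adding $|V(\calk_1)|-2$ new vertices at index $i_1$, each assigned the same $G$-neighborhood as $u$, to realize the remaining vertices of $\calk_1$; (ii) adding $|V(\calk_2)|-2$ new vertices at index $i_2$, each assigned the same $G$-neighborhood as $w$, to realize the remaining vertices of $\calk_2$; and (iii) realizing the shared ``hub'' vertex $X'=Y'$ as the $(a,b)$-geodesic that follows the spine but uses $y_1$ at $i_1$ and $x_2$ at $i_2$. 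This latter geodesic is already present in $G$ when $i_2-i_1\geq 2$, and when $i_2=i_1+1$ it can be produced by adding the single edge $y_1x_2$.

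In Case $(2)$, where $i_1=i_2$, both $u$ and $w$ sit at the same index and $\diff(U,W)=\{i_1,j\}$ for some $j\neq i_1$. Because $u$ and $w$ are unique to $U$ and $W$, the maximal clique $\ucal$ of $U$ (resp.\ $\wcal$ of $W$) in $\h$ has difference index $i_1$. To realize the first variant of Construction~\ref{const:B}, I add $|V(\calk)|-1$ new vertices at index $j$, each having the $G$-neighborhood of $U$'s vertex at $j$, producing $|V(\calk)|-1$ new $(a,b)$-geodesics that cluster with $U$ at index $j$. Among these I designate $X'$ to be the geodesic that uses $W$'s already-existing vertex at $j$ rather than a wholly new vertex; this choice makes $X'$ differ from every vertex of $\wcal$ at exactly the single index $i_1$, yielding the matching between $X'$ and $\wcal$ required by the construction. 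The symmetric variant (identifying $X$ with $W$ and joining $X'$ to $\ucal$) is handled by the mirror image of this argument.

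The main obstacle is verifying that these local modifications introduce exactly the intended new $(a,b)$-geodesics and adjacencies and no others. The unique vertex-path properties of $u$ and $w$ are essential here: they preclude any stray geodesic from being accidentally routed through either vertex by the new edges, and the hypothesis that $U$ and $W$ are each in a single maximal clique of $\h$ prevents the newly attached cliques from merging with existing ones. The subtlest boundary cases occur when the indices in play are adjacent (for instance $i_2=i_1+1$ in Case 1, or $|j-i_1|=1$ in Case 2), and I plan to dispatch these either by the explicit edge addition above or, uniformly, by first applying Proposition~\ref{manybasegraphs} to replace $G$ with a base graph for $\h$ in which the relevant indices are separated by at least one level. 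Preservation of the unique vertex-path property for untouched vertices of $V(\h)$ and for the newly added non-$\{X,X',Y,Y'\}$ clique vertices then follows because each newly introduced vertex of the base graph lies on exactly one of the new $(a,b)$-geodesics.
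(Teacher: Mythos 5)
Your core construction is the same as the paper's: perform explicit surgery on the base graph by adding, at the index levels of $u$ and $w$, new vertices whose neighborhoods duplicate the relevant spine neighborhoods, together with a single bridging edge that realizes the shared vertex $X'=Y'$ (resp.\ the matching in Construction~\ref{const:B}); the preservation of the unique vertex-path property is argued the same way. The substantive problems are in how you organize the cases. First, the subcase $|i_1-i_2|\geq 2$ (and $|j-i_1|\geq 2$ in Case~2) is vacuous, and recognizing this is not optional: if $U$ and $W$ differ at exactly two non-adjacent index levels, then the hybrid path that follows their common spine but takes $u$ at level $i_1$ and $w$ at level $i_2$ is already an $(a,b)$-geodesic of $G$ (every edge it needs belongs to $U$ or to $W$), and it is a second geodesic through $u$, contradicting the hypothesis that $U$ is the $u$-geodesic. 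The paper establishes up front that the two difference indices must be consecutive and then runs only the construction you label a ``boundary case.'' Your main branch for Case~1, in which the hub geodesic ``is already present in $G$,'' therefore describes a situation that cannot arise --- and if it could, the hub would be an existing vertex of $\h$ rather than a new vertex shared by $\calk_1$ and $\calk_2$, so it would not yield the graph of Construction~\ref{const:A} in any event.

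Second, the proposed uniform fallback --- invoking Proposition~\ref{manybasegraphs} to separate the two relevant index levels --- is unsound. That proposition only lengthens $d(a,b)$ by padding at an endpoint; it provides no mechanism for inserting an index level between two prescribed interior levels, and by the observation above no base graph consistent with the hypotheses can place the difference indices of $U$ and $W$ at non-adjacent levels. Since you also give the direct argument when the levels are consecutive (add the single edge $y_1x_2$, or $uw_{i_1+1}$ in Case~2), which is essentially the paper's construction --- the paper happens to bridge $u$ to $w$ rather than $y_1$ to $x_2$, but either hybrid serves as the hub --- your proof survives once the empty branch and the unsound fallback are removed; as written, though, the case analysis rests on a dichotomy whose ``generic'' side is empty.
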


\begin{proof}
Note that since $U$ and $W$ are non-adjacent, $|\diff(U,W)|=2$.
Since the disjoint union of SPGs is a SPG by Lemma~\ref{disconnect}, let $\h$ be connected. From Observation~\ref{cliqueParity}, it follows that $U$ and $W$ are in exactly one maximal clique as shown in Figures~\ref{small_ear_of_cliques3}, ~\ref{small_ear_of_cliques}, and~\ref{small_ear_of_cliques2}. 
Since there are no $4$-cycles in $\h$ or $\h'$ and $|\diff(U,W)|=2$, $|i_1-i_2|\leq 1$ and if $i_1=i_2$, then the other difference index in $\diff(U,W)$ must be $i_1+1$ or $i_1-1$.
Notice there are no additional $(a,b)$-geodesics in $G$ passing through $u$ and $w$ as a result. 
Without loss of generality, assume that $i_1 \leq i_2$ and let $U$ and $W$ be the following:

\[
U=av_1\cdots v_{i_1-1}v_{i_1}v_{i_1+1}v_{i_1+2}\cdots v_pb \;\;\text{ and }\;\; W=av_1\cdots v_{i_1-1}w_{i_1}w_{i_1+1}v_{i_1+2}\cdots v_pb \text{.}
\]  
With this notation, note that $u = v_{i_1}$. If $i_1 < i_2 = i_1 +1$, then  $w =w_{i_1 +1}$. If $i_1 = i_2$ then $w = w_{i_1}$. Also, note that $v_{i} \neq w_{i}$ for $i \in \{i_1,i_1 +1\}$.   

First, assume $i_1 < i_2$. 
To perform Construction~\ref{const:A}, we build the base graph $G_1$ from $G$ as follows. The vertex set $V(G_1)=V(G)\cup \{u_1,\ldots, u_{k_1-2},u_1',\ldots,u_{k_2-2}'\}$, and the edge set 
\begin{align*}
E(G_1)&=E(G)\cup \{v_{i_1}w_{i_1+1}\}\cup\{v_{i_1-1}u_i, u_iw_{i_1+1} \,:\, i\in \{1,\ldots,k_1-2\}\}\\
&\;\;\;\;\;\;\;\;\;\;\;\;\;\cup \{v_{i_1}u_i', u_i'v_{i_1+2}\,:\, i\in\{1,\ldots,k_2-2\}\}\text{,}
\end{align*}
where $k_1$ and $k_2$ are the numbers of vertices in the new cliques added to $\h$ containing $W$ and $U$, respectively (as shown in Figure~\ref{small_ear_of_cliques}). Under this construction, $S(G_1,a,b)$ is the graph $\h'$ with $U$ and $W$ connected as described in Construction~\ref{const:A}.

To see that all vertices in $\h$ (other than $U$ or $W$) that satisfied the unique vertex-path property prior to the construction still satisfy this property after the construction, note that the only vertices in $V(G)$ that are a part of new paths in $G_1$ are in the following set: $\{v_1, \ldots, v_{i_1}, w_{i_1+1}, v_{i_1+2}, \ldots, v_p\}$. Because either $U$ or $W$ passes through each of these vertices, no $(a,b)$-geodesic in $G$ other than $U$ or $W$ can be the unique $(a,b)$-geodesic passing through any of these vertices. Moreover, every new vertex added to $\h$, other than $X'$ (or $Y'$), is either the $u_i$-geodesic for some $i \in \{1, \ldots, k_1 - 2\}$ or the $u_i'$-geodesic for some $i \in \{1, \ldots, k_2 - 2\}$.

Now, assume $i_1 = i_2$. 
To perform Construction~\ref{const:B}, we form the base graph $G_2$ with the vertex set $V(G_2)=V(G)\cup \{u_1,\ldots, u_{k_1-2}\}$ and edge set 
\[
E(G_2)=E(G)\cup \{v_{i_1}w_{i_1+1}\}\cup\{v_{i_1}u_i, u_iv_{i_1+2} \,:\, i\in \{1,\ldots,k_1-2\}\}\text{,}
\]
where $k_1$ is the number of vertices in the new clique in $\h$ containing $U$, as shown in the left graph of Figure~\ref{small_ear_of_cliques2}. Under this construction, $S(G_2,a,b)$ is the graph $\h$ with $U$ and $W$ connected as shown in the first graph of Construction~\ref{const:B}. To form the second graph in Construction~\ref{const:B}, the edge set
\[
E(G_2)=E(G)\cup\{w_{i_1},v_{i_1+1}\}\cup\{w_{i_1}u_i,u_iv_{i_1+2}\,:\, i\in \{1,\ldots,k_1-2\}\}
\]
is used instead of the edge set described above.

If $i_1 = i_2$, it also possible that $W$ has the form, $W=av_1\cdots v_{i_1-2}w_{i_1-1}w_{i_1}v_{i_1+1}\cdots v_p b$. If this is the case, the graph shown in Construction~\ref{const:B} can be constructed using an argument analogous to the one above.
\end{proof}

\subsection{Classification of $C_4$-free SPGs}\label{sec:mainthm}

Putting this all together, in this section we characterize all SPGs with no induced $4$-cycles. 
We define a \textit{collection of cliques} as a graph that is claw-free and given any two distinct maximal cliques $\h_1$ and $\h_2$ in $\h$, $|V(\h_1)\cap V(\h_2)|\leq 1$. That is, a collection of cliques is a tree of cliques that can contain cycles. 
An example of a collection of cliques is shown in Figure~\ref{cyclesOfCliques}. Note that a collection of cliques may be disconnected. 

\begin{figure}[htb]
\begin{center}
\includegraphics[scale=0.75]{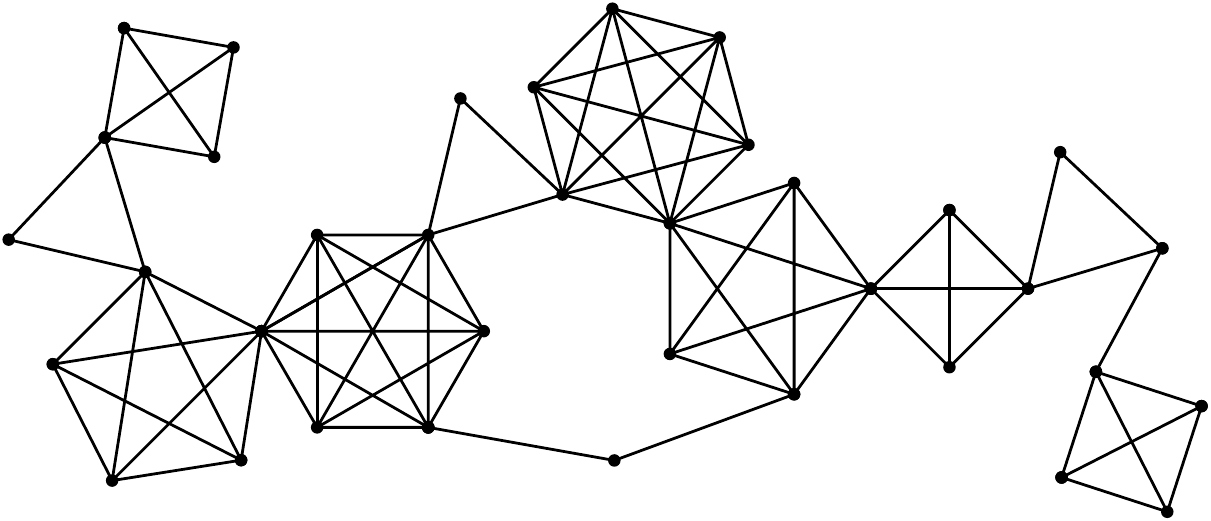}
\end{center}
\caption{A collection of cliques}\label{cyclesOfCliques}
\end{figure}

\begin{theorem}\label{no4cycles_classification}
Let $\h$ be a  $C_4$-free graph. Then $\h$ is a SPG if and only if $\h$ is a collection of cliques that is $C_k$-free for all odd $k \geq 5$. Furthermore, the base graph of $\h$ can be constructed with only two index levels. 
\end{theorem}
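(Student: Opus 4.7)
My plan is to prove the two directions by different methods.

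For the forward direction ($\Rightarrow$), if $\h$ is a $C_4$-free SPG, I would derive each of the desired properties by invoking earlier results from the excerpt: $\h$ is claw-free by Theorem~\ref{noClaw} (an induced claw would force an induced $C_4$); maximal cliques intersect in at most one vertex by Lemma~\ref{characterCliques}, since a matching of size $\geq 2$ between disjoint cliques produces an induced $C_4$; $\h$ is $C_5$-free by the SPG property noted in the excerpt; and $\h$ is $C_k$-free for odd $k \geq 7$ by Theorem~\ref{oddtoC4}.

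For the backward direction ($\Leftarrow$), assume $\h$ is a $C_4$-free collection of cliques that is $C_k$-free for all odd $k \geq 5$, and reduce to connected $\h$ via Theorem~\ref{disconnect}. Two observations drive the argument. First, every vertex of $\h$ lies in at most two maximal cliques, else three cliques through $v$ together with one representative from each produce an induced claw. Second, the clique graph $T$ (maximal cliques as nodes, shared-vertex pairs as edges) is bipartite: a $k$-cycle in $T$ produces $k$ distinct shared vertices (by the first observation) that form an induced $C_k$ in $\h$ with no chords (a chord would put a vertex in three maximal cliques or two cliques in more than one common vertex), ruling out $k=3$, $k=4$, and odd $k\geq 5$. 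Fix a proper $2$-coloring of $T$ with classes $\A$ and $\B$.

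Strengthen the inductive hypothesis to: $\h$ admits a base graph with two index levels in which each maximal clique in $\A$ has difference index $1$ and each in $\B$ has difference index $2$. Induct on $|E(T)|$. The base case $|E(T)| = 0$ is a disjoint union of cliques, built with the prescribed difference indices using the base-graph construction in the proof of Theorem~\ref{Thm:forestOfCliques} and combined via Theorem~\ref{disconnect}. For the inductive step, pick an edge of $T$ corresponding to some $X \in \calk_1 \cap \calk_2$ with $\calk_1 \in \A$, $\calk_2 \in \B$, remove $X$, apply the inductive hypothesis to $\h - X$, and add $X$ back via Construction~\ref{const:C} (Lemma~\ref{lem:small_ears_of_cliques}) applied to $\calk_1 \setminus \{X\}$ and $\calk_2 \setminus \{X\}$, whose difference indices are of opposite parity by the strengthened hypothesis.

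The main obstacle is checking that $\calk_i \setminus \{X\}$ is really a maximal clique in $\h - X$: when $|\calk_i| = 2$, the lone remaining vertex may be absorbed into another maximal clique, and this happens for every choice of $X$ when $\h$ is an even cycle $C_{2n}$. I would address this by preferring $X$ in a clique of size $\geq 3$ whenever one exists (covering the case where $\h$ contains a triangle, which is the focus of Section~\ref{girth3}), and otherwise constructing the base graph directly: set $V(H) = \A \cup \B$ augmented by one auxiliary vertex of the opposite color for each vertex of $\h$ lying in only one maximal clique, let $E(H) = V(\h)$ so that each $v \in V(\h)$ becomes the edge joining its containing cliques in $H$, and let $G = \{a,b\} \cup V(H)$ with $a$ adjacent to the $\A$-side and $b$ adjacent to the $\B$-side. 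Then $S(G, a, b)$ is the line graph of the bipartite graph $H$, which coincides with $\h$ in exactly two index levels.
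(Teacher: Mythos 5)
Your forward direction is the same as the paper's: claw-freeness from Theorem~\ref{noClaw}, the clique-intersection condition from Lemma~\ref{characterCliques}, and odd-cycle-freeness from $C_5$-freeness of SPGs plus Theorem~\ref{oddtoC4}. Your backward direction, however, takes a genuinely different route. The paper inducts on $|V(\h)|$: it picks a vertex $X$ shared by two maximal cliques lying on an induced even cycle, deletes it, and re-attaches it with Construction~\ref{const:C} via Lemma~\ref{lem:small_ears_of_cliques}, deriving the required opposite parity of difference indices from connectivity of $\h-X$ together with Proposition~\ref{adj_cliques}. You instead make the bipartiteness of the clique graph $T$ explicit and carry a fixed $2$-coloring through a strengthened induction. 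Better still, your ``fallback'' construction --- realize $\h$ as the line graph of the bipartite graph $H$ whose nodes are the maximal cliques (plus pendant auxiliary nodes) and whose edges are the vertices of $\h$, then take $G$ to be $H$ with $a$ joined to one side and $b$ to the other --- is not merely a patch for the even-cycle case: it works verbatim for every connected collection of cliques with bipartite clique graph, produces a base graph with exactly two index levels, and makes the induction and Lemma~\ref{lem:small_ears_of_cliques} unnecessary. I would promote it to the main argument. It is also to your credit that you noticed the maximality obstacle ($\calk_i\setminus\{X\}$ need not be a maximal clique of $\h-X$ when $|\calk_i|=2$); the paper's proof applies Lemma~\ref{lem:small_ears_of_cliques} without addressing this point.

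Two steps need repair. First, your reason for excluding $3$-cycles from $T$ does not work as stated: the three shared vertices of three pairwise-intersecting maximal cliques form an induced $C_3$ in $\h$, and triangles are of course permitted in a collection of cliques, so nothing is ``ruled out'' by forbidden induced cycles here. The correct argument is that those three vertices form a triangle, hence lie in a common maximal clique $\calk$; then $\calk$ meets each $\calk_i$ in at least two vertices, forcing $\calk=\calk_1=\calk_2=\calk_3$, contradicting distinctness. Second, ``preferring $X$ in a clique of size $\geq 3$'' does not dispose of the maximality obstacle: if $X\in V(\calk_1)\cap V(\calk_2)$ with $|V(\calk_1)|\geq 3$ but $|V(\calk_2)|=2$, the problem recurs on the $\calk_2$ side, and such configurations arise in graphs that are far from being a single even cycle. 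Rather than classifying these cases, apply your line-graph construction uniformly.
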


\begin{proof}
By assumption, $\h$ is $C_4$-free. Thus, if $\h$ is a SPG, it follows from Theorem~\ref{noClaw} that $\h$ is claw-free. 
It was shown in \cite{AAEHHNW} that SPGs are $C_5$-free. From Theorem~\ref{oddtoC4}, it follows that $\h$ is $C_k$-free for any odd $k > 5$. 
By Lemma~\ref{characterCliques}, $\h$ satisfies the remaining condition to be a collection of cliques. 

Now, assume that $\h$ is a collection of cliques that is $C_k$-free for all odd $k \geq 5$. 
By Theorem~\ref{disconnect}, we can assume that $\h$ is connected.
Any such graph that does not have an induced $C_k$ for $k >3$ is a tree of cliques and thus any such $\h$ is an SPG by Theorem~\ref{Thm:forestOfCliques}.
We can now assume that $\h$ contains an induced cycle of even length $k >4$. 
We proceed by induction on the number of vertices in $\h$.
Let $|V(\h)|=n$ and assume any collection of cliques that is $C_4$-free, $C_k$-free for odd $k \geq 5$, and with fewer than $n$ vertices is a SPG, and the base graph of $\h$ can be constructed with two index levels.

Let $X$ be a vertex in  $V(\h)$ shared between two distinct maximal cliques, $\h_1$ and $\h_2$, that are part of an induced even cycle contained in $\h$. Let $\h'$ be the graph formed from $\h$ by deleting $X$. 
By the induction hypothesis, $\h' = S(G,a,b)$ where $G$ only has two index levels. 
Because $X$ was the only vertex in $V(\h)$ joining $\h_1$ and $\h_2$, in $\h'$, $\h_1-X$ and $\h_2-X$ are disjoint. Because $\h_1$ and $\h_2$ are part of an induced cycle with even length in $\h$, $\h'$ is connected, and from Proposition~\ref{adj_cliques}, it follows that the difference indicies of $\h_1-X$ and $\h_2-X$ have opposite parity. 
By Lemma~\ref{lem:small_ears_of_cliques}, we can form a base graph $G'$ such that $S(G',a,b)=\h$ and $G'$ has two index levels as desired.
By induction, the result follows.
\end{proof}

\section*{Acknowledgments}

The authors are grateful to AIM and to the organizers of the REUF program at AIM for making this
collaboration possible. This project was initiated as part of the REUF program at AIM, NSF grant DMS 1620073. We would also like to thank Beth Novick and Ruth Haas for their helpful suggestions in preparing this paper. 

\bibliographystyle{amsplain}
\bibliography{vdec}

\end{document}